\documentclass[a4paper,11pt]{article}
\usepackage{amsmath, amsfonts, amscd, amssymb, amsthm, enumerate}

\DeclareMathOperator{\id}{\operatorname{id}}

\DeclareMathOperator{\Mat}{\operatorname{M}}
\DeclareMathOperator{\MatH}{\operatorname{\mathcal{H}}}
\DeclareMathOperator{\Matah}{\operatorname{A\mathcal{H}}}

\DeclareMathOperator{\Mata}{\operatorname{A}}
\DeclareMathOperator{\Mats}{\operatorname{S}}

\DeclareMathOperator{\GL}{\operatorname{GL}}

\DeclareMathOperator{\car}{\operatorname{char}}

\renewcommand{\setminus}{\smallsetminus}
\renewcommand{\epsilon}{\varepsilon}

\def\F{\mathbb{F}}

\def\D{\mathbb{D}}

\def\calF{\mathcal{F}}

\def\calM{\mathcal{M}}
\def\calN{\mathcal{N}}

\def\calZ{\mathcal{Z}}

\def\lcro{\mathopen{[\![}}
\def\rcro{\mathclose{]\!]}}

\theoremstyle{definition}

\theoremstyle{plain}
\newtheorem{theo}{Theorem}[section]

\newtheorem{lemma}[theo]{Lemma}

\theoremstyle{plain}

\theoremstyle{remark}

\title{Range-compatible homomorphisms on Hermitian matrices}
\author{Cl\'ement de Seguins Pazzis\footnote{Universit\'e de Versailles Saint-Quentin-en-Yvelines, Laboratoire de Math\'ematiques
de Versailles, 45 avenue des Etats-Unis, 78035 Versailles cedex, France}
\footnote{e-mail address: clement.de-seguins-pazzis@ac-versailles.fr}}

\begin{document}

\thispagestyle{plain}

\maketitle
\begin{abstract}
Let $\D$ be a division ring with an involution $x \mapsto x^\star$, and $n \geq 2$ be an integer.
Denote by $\MatH_n(\D)$ the set of all $n$-by-$n$ Hermitian matrices with entries in $\D$, and by 
$\Matah_n(\D)$ the set of all matrices $A-A^\star$ with $A \in \Mat_n(\D)$.
Here, we give a complete solution to the following problem:
Determine all group homomorphisms from $\MatH_n(\D)$ to $\D^n$ (respectively, from $\Matah_n(\D)$ to $\D^n$ unless $(-)^\star$ is the identity) that take every matrix to a right linear combination of its columns.

The solution to this problem was already known when $(-)^\star$ is the identity, and the novelty here lies in the generalization to arbitrary involutions, and in particular
in the noncommutative case.

These results are to be used in a subsequent article on subspaces of Hermitian matrices of bounded rank, and on large spaces of diagonalisable matrices.
\end{abstract}

\vskip 2mm
\noindent
\emph{AMS MSC:} 15A57, 15A30

\vskip 2mm
\noindent
\emph{Keywords:} range-compatible homomorphisms, division rings,  characteristic $2$, rings with involution, Hermitian matrices


\section{Introduction}

\subsection{Introduction to range-compatible homomorphisms}

This article deals with range-compatible homomorphisms on matrix spaces, a concept that first emerged in
\cite{dSPclassIsrael} as a tool in the study of large linear subspaces of matrices, and was first investigated in depth in \cite{dSPRC1}. 
Since then, its study has been greatly expanded \cite{dSPRCsym,dSPFlandersskew,dSPRCaffine,dSPRC2}, and numerous applications to spaces
of bounded rank matrices have been obtained \cite{dSPclassIsrael,dSPaffinesym,dSPlargerankrevisited}.

Throughout, we let $\D$ be a division ring, that is an associative algebra with a unity $1_\D \neq 0_\D$ 
in which every nonzero element is invertible. Right-$\D$-modules (respectively, left-$\D$-modules) are simply called
right-$\D$-vector spaces (respectively, left-$\D$-vector spaces), and we naturally see $\D^n$, the set of all $n$-lists of elements of $\D$,
which we identify with column matrices with $n$ rows, as a right $\D$-vector space through the scalar multiplication
$$\begin{bmatrix}
x_1 & \cdots & x_n
\end{bmatrix}^T\,a:=
\begin{bmatrix}
x_1 a & \cdots & x_n a 
\end{bmatrix}^T.$$
 We define the column space, or range, of a matrix
$M \in \Mat_{n,p}(\D)$ (with $n$ rows, $p$ columns and entries in $\D$) as the linear span of its columns in the right $\D$-vector space $\D^n$.
We will denote by $(E_1,\dots,E_n)$ the standard basis of $\D^n$.

Given a subset $\calM$ of $\Mat_{n,p}(\D)$, a \textbf{range-compatible mapping} on $\calM$ is 
a mapping $F : \calM \rightarrow \D^n$ that assigns to every matrix of $\calM$ a vector of its range.
Now, assume that $\calM$ is a subgroup of $\Mat_{n,p}(\D)$. A \textbf{range-compatible homomorphism} on $\calM$
is a range-compatible mapping that is also a group homomorphism from $(\calM,+)$ to $(\D^n,+)$.
The set of all such mappings is then not only a subgroup of the group $(\calF(\calM,\D^n),+)$ of all mappings from $\calM$ to $\D^n$, but also a linear subspace of the 
right $\D$-vector space $\calF(\calM,\D^n)$ (simply, if $F$ is a range-compatible homomorphism then so is $F \alpha$ for every $\alpha \in \D$).

A basic example of range-compatible homomorphisms are the so-called local mappings. A 
mapping $F : \calM \rightarrow \D^n$ is called \textbf{local} whenever there exists $X \in \D^p$
such that $\forall M \in \calM, \; F(M)=MX$. Obviously, all local mappings are range-compatible homomorphisms,
and the basic question is whether, for a given $\calM$, the converse holds.

Obviously, for $n=1$ every group homomorphism from $\calM$ to $\D$ is range-compatible,
and more generally this holds true whenever $n=p$ and the nonzero matrices in $\calM$ are invertible: in those cases it is generally possible
to construct range-compatible homomorphisms that are non-local, and it is rather hopeless to find a nice closed form for all
the group homomorphisms from $\calM$ to $\mathbb{D}$.

Range-compatible homomorphisms have been largely studied for the following kinds of spaces:
\begin{itemize}
\item Spaces of rectangular matrices over a field, with small codimension in the full space \cite{dSPclassIsrael,dSPRC1,dSPRC2}.
\item Full spaces of matrices over a division ring: see section 2.2 of \cite{dSPFlandersskew}, where it is assumed that $\D$ is finite-dimensional over its center, but this assumption is not needed at all. For example, if $n \geq 2$ then every range-compatible homomorphism on $\Mat_{n,p}(\D)$ is local. This can be viewed as a simple application of the characterization of the group endomorphisms of $\D^n$ that map every vector to a right scalar multiple of itself.
\item Full spaces of symmetric matrices over a field \cite{dSPRC1}, and more generally spaces of symmetric matrices over a field with small codimension in the corresponding space of symmetric matrices \cite{dSPRCsym}.
\item Full spaces of alternating matrices over a field, and more generally spaces of alternating matrices over a field with small codimension in the corresponding space of alternating matrices \cite{dSPRCsym}.
\end{itemize}

\subsection{Subgroups of Hermitian matrices}

Here we shall consider a form of generalization of the results on symmetric/alternating matrices, so as to obtain the Hermitian case.
Let $R$ be a ring (associative, with a unity $1_R \neq 0_R$), and 
$x \mapsto x^\star$ be an involution of $R$, i.e., an endomorphism of the group $(R,+)$ that satisfies
$$\forall (x,y)\in R^2, \; (xy)^\star=y^\star x^\star \quad \text{and} \quad \forall x \in R, \; (x^\star)^\star=x,$$
which in particular requires that $(1_R)^\star=1_R$.
The set of all \textbf{Hermitian} elements 
$$\{x \in R : x^\star=x\}$$ 
is a subgroup of $(R,+)$, 
and so is the one of all \textbf{alternate-Hermitian} elements 
$$\{x-x^\star \mid x \in R\}.$$

The involution $(-)^\star$ leaves the center $\calZ$ of $R$ invariant, and is called of the first kind if it
is the identity on $\calZ$, and of the second kind otherwise.
The unit group $R^\times$ acts on $R$ by congruence through $x.a:=xax^\star$, and we note that 
the set of all Hermitian elements is invariant under congruence, as well as the set of all alternate-Hermitian elements.

Now, and throughout the article, we endow $\D$ with such an involution $(-)^\star$, which we fix once and for all.
We set
$$H:=\{x \in \D : x^\star=x\} \quad \text{and} \quad A:=\{x^\star-x \mid x \in \D\}.$$
If $\car(\D) \neq 2$ then $A$ also equals the set of all skew-Hermitian elements (the elements $x$ for which $x^\star=-x$),
and we have $H \oplus A=\D$; if $\car(\D)=2$ the skew-Hermitian elements of $\D$ are the Hermitian ones, and they differ from the alternate-Hermitian ones in general.
A notable exception is when $(-)^\star$ is of the second kind and $\car(\D)=2$, in which case we can pick a central element $a_0$ such that $a_0^\star \neq a_0$, and use it to write every Hermitian element $x$ as $x=(\lambda a_0 x)-(\lambda a_0 x)^\star$ for the central Hermitian element $\lambda:=(a_0-a_0^\star)^{-1}$.

The involution $(-)^\star$ induces, for all integers $n>0$ and $p>0$, a bijection $M=(m_{i,j}) \in \Mat_{n,p}(\D) \mapsto M^\star:=(m_{j,i}^\star) \in \Mat_{p,n}(\D)$,
whose reciprocal mapping is obviously $N \in \Mat_{p,n}(\D) \mapsto M^\star \in \Mat_{n,p}(\D)$.
Moreover, we have the obvious identity $(MN)^\star=N^\star M^\star$ for matrices $M,N$ with entries in $\D$ whenever the multiplication makes sense.
In particular, for $n \geq 1$ we recover an involution $M \mapsto M^\star$ of the ring $\Mat_n(\D)$, and we denote:
\begin{itemize}
\item by $\MatH_n(\D)$ the set of all its Hermitian elements;
\item by $\Matah_n(\D)$ the set of all its alternate-Hermitian elements.
\end{itemize}
For example, if $\D$ is a field and $(-)^\star$ is the identity then we simply have 
$\MatH_n(\D)=\Mats_n(\D)$ the set of all symmetric $n$-by-$n$ matrices with entries in $\D$, 
and $\Matah_n(\D)=\Mata_n(\D)$ the set of all alternating $n$-by-$n$ matrices with entries in $\D$
(i.e., skew-symmetric matrices with all diagonal entries equal to $0$).

In general, it is obvious that $\MatH_n(\D)$ consists of all the matrices $H=(h_{i,j})$ such that 
$h_{j,i}^\star=h_{i,j}$ for all distinct $i$ and $j$, and the diagonal entries are Hermitian elements of $\D$,
whereas $\Matah_n(\D)$ consists of all the matrices $A=(a_{i,j})$ such that $a_{j,i}=-a_{i,j}^\star$ for all distinct $i$ and $j$, and the diagonal entries are alternate-Hermitian elements of $\D$ (and not simply skew-Hermitian).
Finally, it is classical that unless $(-)^\star$ is the identity every matrix of $\MatH_n(\D)$ or of $\Matah_n(\D)$
is congruent to a diagonal one. This fails if $(-)^\star$ is the identity, as no nonzero alternating matrix is congruent to a diagonal one \cite{Knus},
yet every non-alternating symmetric matrix with entries in a field is congruent to a diagonal one (see theorem 3.0.13 of \cite[Chapter XXXV]{invitquad}).

We can now state our aim here: classify all range-compatible homomorphisms on $\MatH_n(\D)$ and $\Matah_n(\D)$.
As said earlier, this has been already achieved in the special case where $(-)^\star$ is the identity
(in which case $\D$ is commutative). See \cite{dSPRC1} for the case of symmetric matrices, and \cite{dSPRCsym} for the case of alternating matrices. Here, we will systematically avoid the situation of alternating matrices over a field, and we simply state the result for reference:

\begin{theo}[Special case of theorem 1.7 in \cite{dSPRCsym}]\label{theo:alternate}
Let $\F$ be a field, and $n \geq 3$ be an integer.
Then every range-compatible homomorphism on $\Mata_n(\F)$ is local.
\end{theo}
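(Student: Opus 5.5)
The plan is to argue by induction on $n\geq 3$, with the base case $n=3$ handled by direct computation. Let $F$ be a range-compatible homomorphism on $\Mata_n(\F)$.

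\textbf{Step 1: reduction to a coordinate statement.} Write $F=(f_1,\dots,f_n)^T$, where each $f_i : \Mata_n(\F)\to\F$ is additive. For distinct $i,j$ set $A_{i,j}:=E_iE_j^T-E_jE_i^T$. Since $A_{i,j}\lambda$ has range contained in $\F E_i+\F E_j$, the vector $F(A_{i,j}\lambda)$ lies in that plane. Because $F$ is additive and $\Mata_n(\F)$ is spanned additively by the matrices $\lambda A_{i,j}$, knowing these values determines $F$. The goal is to find $X\in\F^n$ with $F(\lambda A_{i,j})=\lambda A_{i,j}X=\lambda(x_jE_i-x_iE_j)$ for all $i<j$ and all $\lambda$.

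\textbf{Step 2: linearity and locality on rank-two blocks.} For a fixed triple $\{i,j,k\}$, consider the alternating matrices supported on the rows and columns $i,j,k$; every such nonzero matrix has rank $2$. Given $a,b,c\in\F$, the matrix
$$M=aA_{i,j}+bA_{i,k}+cA_{j,k}$$
has a one-dimensional kernel spanned by $(c,-b,a)$ in the coordinates $(i,j,k)$. Since $M$ is alternating, its range is the orthogonal of that kernel for the standard bilinear form. Range compatibility therefore gives
$$c\,f_i(M)-b\,f_j(M)+a\,f_k(M)=0$$
for all $a,b,c$. Also $f_k(\lambda A_{i,j})=0$ when $k\notin\{i,j\}$. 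Expanding $M$ by additivity and specialising $a$, $b$, $c$ independently yields identities of the form
$$c\,f_i(aA_{i,j})=a\,f_k(cA_{j,k}).$$
Setting $c=1$ and letting $a$ vary shows $f_i(aA_{i,j})=a\,f_k(A_{j,k})$, so the map $a\mapsto f_i(aA_{i,j})$ is $\F$-linear, of the form $a\,x_j$ where $x_j$ depends a priori on $i$. The same identity shows that this scalar is tied to $f_k(A_{j,k})$, which forces consistency across indices. Running over all triples, which requires $n\geq 3$ so that every pair $(i,j)$ has a third index $k$, produces a single vector $X$ with $f_i(\lambda A_{i,j})=\lambda x_j$ and $f_j(\lambda A_{i,j})=-\lambda x_i$.

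\textbf{Step 3: conclusion.} Subtracting the local map $M\mapsto MX$, we get a range-compatible homomorphism that vanishes on every $\lambda A_{i,j}$, hence is zero by additivity.

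The main obstacle is the bookkeeping in Step 2. The coupling identity $cf_i(aA_{i,j})=af_k(cA_{j,k})$ and its permuted versions must be derived carefully with the signs right. One must then check that the scalar attached to index $j$ is the same whether it is read off from $A_{i,j}$ or from $A_{j,k}$. The first thing to try is to fix $k$ and compare the relations obtained from the triples $(i,j,k)$ and $(j,i,k)$, using $A_{j,i}=-A_{i,j}$. This should show the constant depends only on $j$.
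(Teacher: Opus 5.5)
Your overall strategy works, but the coupling identity you display has the wrong sign, and with that sign the consistency step you left open would fail. Take $b=0$. The matrix $M=aA_{i,j}+cA_{j,k}$ has kernel spanned by $cE_i+aE_k$, and $f_i(cA_{j,k})=0=f_k(aA_{i,j})$. Range-compatibility therefore yields
\[
c\,f_i(aA_{i,j})+a\,f_k(cA_{j,k})=0 ,
\]
not $c\,f_i(aA_{i,j})=a\,f_k(cA_{j,k})$. Your version fails for the local map $M\mapsto MX$ unless $2X=0$, since there $f_i(aA_{i,j})=ax_j$ and $f_k(A_{j,k})=-x_j$. With your sign you would get $f_i(A_{i,j})=f_k(A_{j,k})=-f_k(A_{k,j})$, so the scalar attached to $j$ would change sign when read from row $k$ instead of row $i$.

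With the correct sign, everything falls into place:
\begin{itemize}
\item Setting $c=1$ gives $f_i(aA_{i,j})=-a\,f_k(A_{j,k})=a\,f_k(A_{k,j})$.
\item Setting $a=1$ then gives $f_i(A_{i,j})=f_k(A_{k,j})$ for all distinct $i,j,k$. Hence $x_j:=f_i(A_{i,j})$ does not depend on the choice of $i\neq j$; this is exactly where $n\geq 3$ is used.
\item It follows that $f_i(\lambda A_{i,j})=\lambda x_j$ and $f_j(\lambda A_{i,j})=f_j\bigl((-\lambda)A_{j,i}\bigr)=-\lambda x_i$, and your Step 3 finishes the proof.
\end{itemize}
So no comparison of the triples $(i,j,k)$ and $(j,i,k)$ is needed. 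The announced induction on $n$ is also never used: the triple argument works directly for every $n\geq 3$, so you should drop it.

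The paper itself contains no proof of this statement. It quotes it from \cite{dSPRCsym}, as a special case of a result on spaces of alternating matrices of small codimension, and deliberately sets the alternating case aside. Once corrected, your argument is a short self-contained proof of the full-space case. It follows the same localization pattern the paper uses in its last section for Hermitian matrices: restrict to small principal blocks, read off a candidate vector $X$, subtract $M\mapsto MX$, and conclude by additivity.

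The difference is in the size of the blocks. For alternating matrices the $2\times 2$ blocks carry no information, because every nonzero matrix in $\Mata_2(\F)$ is invertible. The natural smallest units are therefore the $3\times 3$ blocks, where every nonzero matrix has rank $2$ and its range is the orthogonal of its kernel. Your argument uses the full space essentially, since every $aA_{i,j}+cA_{j,k}$ must lie in the domain. It therefore does not by itself recover the more general small-codimension statement of \cite{dSPRCsym}.
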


In case $n=2$, every group homomorphism from $\Mata_2(\F)$ to $\F^2$ is range-compatible since every nonzero alternating $2$-by-$2$
matrix is invertible, and if $\F$ is not a prime field it is possible to construct a group homomorphism that is not $\F$-linear,
whereas every local map is $\F$-linear. Hence the need to single out the case $n=2$.

\subsection{Main results}\label{section:mainresults}

We will now state our main new results.
As we shall see, the difficulty is essentially concentrated in the characteristic $2$ case, as was already the case in the study of range-compatible homomorphisms
on spaces of symmetric matrices. To start with, we deal with the other cases:

\begin{theo}\label{theo:mainHermitian}
Let $\D$ be a division ring with an involution $(-)^\star$.
Let $n \geq 2$ be an integer. 
In each one of the following cases,  every range-compatible homomorphism on $\MatH_n(\D)$ is local:
\begin{enumerate}[(i)]
\item $\car(\D) \neq 2$ ;
\item $(-)^\star$ is of the second kind, unless $n=2$ and $|\D|=4$.
\end{enumerate}
\end{theo}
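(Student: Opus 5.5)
Let $F$ be a range-compatible homomorphism on $\MatH_n(\D)$. I would argue by induction on $n$, after first settling $n=2$ directly. The general principle is to subtract a suitable local map and show that what remains vanishes. Two basic observations drive the argument.

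The first concerns rank-one matrices. A rank-one Hermitian matrix has the form $X a X^\star$ with $a\in H\setminus\{0\}$ and $X\in\D^n$. Range-compatibility forces $F(XaX^\star)\in X\D$.

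The second concerns compatibility with congruence. For $P\in\GL_n(\D)$, the map $M\mapsto P^{-1}F(PMP^\star)$ is again range-compatible, and it is local exactly when $F$ is.

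For each $i$, the restriction of $F$ to $E_i H E_i^\star$ takes values in $E_i\D$. This yields an additive map $f_i:H\to\D$, with $F(E_iaE_i^\star)=E_if_i(a)$. The aim is to show $f_i(a)=a\,x_i$ for some fixed $x_i\in\D$. Set $X:=(x_1,\dots,x_n)^T$; then $F-(M\mapsto MX)$ vanishes on diagonal matrices.

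To get linearity of the $f_i$, I would use the $2\times 2$ blocks. Take $M=\begin{bmatrix} a & b\\ b^\star & c\end{bmatrix}$ (padded with zeros), chosen of rank one, for instance $M=\begin{bmatrix}1\\ \lambda\end{bmatrix}a\begin{bmatrix}1 & \lambda^\star\end{bmatrix}$. Then $F(M)=F(aE_{11})+F(\lambda a\lambda^\star E_{22})+F(\text{off-diagonal part})$ must lie in the line spanned by $(1,\lambda)^T$. Writing $g(b)$ for $F$ applied to the off-diagonal matrix with entries $b,b^\star$, and varying $\lambda$ and $a$, one gets a functional equation. Since $b=\lambda a$ can be arbitrary when $a=1$, it should relate $g(\lambda)$ to $f_1(1)$, $f_2(\lambda\lambda^\star)$ and $\lambda$.

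In characteristic not $2$, the key trick is that $\lambda\mapsto -\lambda$ changes the off-diagonal sign but not the diagonal. Adding and subtracting the equations for $\lambda$ and $-\lambda$ separates the diagonal contributions from the off-diagonal ones. This gives $g(b)=$ a linear expression $(b\,x_2, b^\star x_1)^T$ and $f_i(a)=a x_i$. Alternatively, every Hermitian matrix is congruent to a diagonal one. Combined with the second observation, one can reduce to showing that a range-compatible $F$ vanishing on all diagonal matrices and on all rank-one matrices vanishes identically, which follows by additivity once rank-one matrices generate.

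For the second-kind case in characteristic $2$, the $\lambda\mapsto-\lambda$ trick fails. Instead I would use a central non-Hermitian element $a_0$ and scale $\lambda$ by central elements $\mu$ with $\mu\mu^\star$ varying. The diagonal terms then scale by the norm $\mu\mu^\star$ while off-diagonal ones scale by $\mu$ or $\mu^\star$. Because $\mu\neq\mu^\star$ is available, these "degrees" can be separated. This step needs enough central elements, and I expect it to be the main obstacle. The only failure is $\D=\F_4$ with $n=2$, where there are too few scalars. For $n\ge 3$, a third coordinate provides extra room: one tests rank-one matrices supported on three indices, or restricts to $2\times 2$ principal blocks and uses consistency.

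Finally, the induction passes from $n$ to $n+1$ as follows. Restricting $F$ to matrices supported on the first $n$ indices, the last coordinate of the image vanishes, since each such matrix has range in the first $n$ coordinates. So the induction hypothesis makes that restriction local. The same holds for the other principal $n\times n$ blocks, and for $n+1\ge3$ these overlap enough that the local vectors agree. Since the blocks generate $\MatH_{n+1}(\D)$ additively (every entry lies in some block), $F$ is local.
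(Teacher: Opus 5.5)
Your characteristic $\neq 2$ argument is sound and is essentially the paper's Case~1: the substitution $\lambda\mapsto-\lambda$ is the paper's degree-two identity in $t$ over the prime field. Your ``alternatively'' remark via diagonalization does not work as stated, since congruence by $P$ changes the map, but you do not need it. Your overlapping-blocks induction is equivalent to the paper's localization to principal $2\times 2$ blocks.

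\textbf{The gap: characteristic $2$, second kind.} This is where all the difficulty lies, and your proposed scaling does not work. Multiplying the column parameter by a central $t$ does not give an identity that is polynomial in $t$. The maps you want to determine are $a\mapsto F(aE_{1,1})$, $c\mapsto F(cE_{2,2})$ and $b\mapsto F(E_{2,1}b+E_{1,2}b^\star)$. They are merely additive, hence only $\F_2$-linear. So $F(tt^\star c\,E_{2,2})$ and $F(E_{2,1}tb+E_{1,2}(tb)^\star)$ have no a priori relation to the values at $c$ and $b$. The ``degrees'' you want to separate do not exist until some central (semi)linearity is proved, and proving it is the whole content of the argument. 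The answer is not central-linear in general: $\Phi_0$ over $\F_4$ involves $x\mapsto x^\star$. Your $\pm1$ trick works in characteristic $\neq 2$ only because every additive map commutes with $-1$ and $-1\neq 1$.

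\textbf{The missing idea.} The paper uses a functional-equation route; here $\lambda,\mu$ denote the diagonal maps and $f,g$ the off-diagonal ones.
\begin{enumerate}[(1)]
\item In characteristic $2$, polarize the rank-one identity $f(x)+\mu(xax^\star)=xa(\lambda(a^{-1})+g(x)^\star)$ in $x$. The terms additive in $x$ drop out, leaving $\mu(xay^\star+yax^\star)=xa\,g(y)^\star+ya\,g(x)^\star$ for all $a\in H$.
\item Replace $(x,y)$ by $(xz,yz)$ and use congruence invariance of $H$. This shows that $\varphi_z:x\mapsto g(xz)z^{-1}-g(x)$ satisfies $xa\,\varphi_z(y)^\star=ya\,\varphi_z(x)^\star$.
\item Lemma~\ref{lemma:basic} then forces $\varphi_z=0$ if $\D$ is noncommutative, and $\varphi_z$ of the form $x\mapsto c_z x^\star$ if $\D$ is commutative.
\item In the noncommutative case this gives $f=g=0$ after normalization, then $\lambda=\mu$ root-linear. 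The second kind enters through $H=A$ and Lemma~\ref{lemma:rootlinear}, not through norms of central elements.
\item In the commutative case, after renormalizing, one gets $f(x)=\alpha x^\star$, $g(x)=\beta x^\star$, $\lambda=\mu=0$, and then $\beta^\star x^2=\alpha x^\star$ for all $x$. A nonzero solution forces $x^4=x$, i.e.\ $|\D|=4$. This is how the exception is located; your sketch only asserts it.
\end{enumerate}

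\textbf{The $\F_4$ base case.} For $\D=\F_4$ your induction cannot start at $n=2$, where non-local maps exist. You need a separate base case $n=3$. It uses the complete classification at $n=2$ (local maps plus multiples of $\Phi_0$), followed by explicit rank-one tests: the all-ones matrix, and $X^\star X$ with $X=\begin{bmatrix}1 & x & x\end{bmatrix}$, $x\notin\F_2$. ``A third coordinate provides extra room'' is not yet an argument.
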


The special case where $n=2$, $|\D|=4$ and $(-)^\star$ is of the second kind will be discussed later.

\begin{theo}\label{theo:mainaltHermitian}
Let $\D$ be a division ring with an involution $(-)^\star$ that is not the identity.
Let $n \geq 2$ be an integer. 
Then, unless $n=2$ and $|\D|=4$, every range-compatible homomorphism on $\Matah_n(\D)$ is local.
\end{theo}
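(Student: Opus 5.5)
I will prove Theorem \ref{theo:mainaltHermitian} by induction on $n$, mirroring the strategy used for symmetric matrices in \cite{dSPRC1}. Two tools drive it. First, the right $\D$-vector space of range-compatible homomorphisms on $\Matah_n(\D)$ is stable under congruence: if $F$ is range-compatible and $P \in \GL_n(\D)$, then $M \mapsto P^{-1}F(PMP^\star)$ is again range-compatible, because $\Matah_n(\D)$ is invariant under congruence and $\operatorname{Im}(PMP^\star)=P\operatorname{Im}(M)$. Second, I will use restriction to block-diagonal subspaces. The matrices of the form $\begin{bmatrix} N & 0 \\ 0 & 0\end{bmatrix}$ with $N \in \Matah_{n-1}(\D)$ have range contained in $\D^{n-1}\times\{0\}$. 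So $F$ induces a range-compatible homomorphism on $\Matah_{n-1}(\D)$, which by induction is local, given by some $X \in \D^{n-1}$. Subtracting the local map $M \mapsto M\begin{bmatrix} X \\ 0\end{bmatrix}$, I may assume $F$ vanishes on that subspace. Applying the same reasoning to other coordinate hyperplanes, and to the block structure $\Matah_1(\D)\times\Matah_{n-1}(\D)$, reduces everything to understanding $F$ on the matrices supported on the last row and last column.

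For the base case $n=2$, I plan to write a general element as
$$M=\begin{bmatrix} a & b \\ -b^\star & c\end{bmatrix}, \qquad a,c \in A,\ b\in\D.$$
Each piece is handled with a vector-wise lemma. On the off-diagonal pieces $\begin{bmatrix} 0 & b \\ -b^\star & 0\end{bmatrix}$, which are invertible for $b\neq 0$, range-compatibility gives nothing directly. Instead I will use matrices with $a\neq 0$ and $ac$ related to $bb^\star$ so that $M$ has rank $1$, i.e.\ $M=x\alpha x^\star$ for a column $x$ and $\alpha \in A$. Then $F(M)\in x\D$. The rank-$1$ alternate-Hermitian matrices $x\alpha x^\star$ generate $\Matah_2(\D)$. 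The condition $F(x\alpha x^\star)\in x\D$ for all $x$ should force $F(x\alpha x^\star)=x\,\varphi(x,\alpha)$, with $\varphi$ additive in $\alpha$.

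The key step is to show that there is a fixed $X$ with $F(x\alpha x^\star)=x\alpha x^\star X$. That is, I must show $\varphi(x,\alpha)=\alpha x^\star X$. I plan to obtain this by comparing several decompositions of the same matrix as a sum of rank-$1$ terms, using additivity, and by exploiting $A\neq\{0\}$, which holds because $(-)^\star\neq\id$. Taking $x=E_1$, $x=E_2$ and $x=E_1+E_2 t$ and varying $t\in\D$, I will expand $(E_1+E_2t)\alpha(E_1+E_2t)^\star$ into its four entries. Matching components should yield functional equations of the form $\varphi(E_1+E_2t,\alpha)=f(\alpha)+g(t\alpha)$ with $f$ and $g$ additive. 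These will in turn force $\varphi$ to be given by right multiplication after the entrywise vector $X$ is fixed.

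The main obstacle is exactly this functional-equation step in the noncommutative, characteristic $2$ setting. There $A$ may be small, for instance a central subfield-like set, and the trick of separating the equations by choosing $t$ and $2t$ fails. This is where I expect the exceptional case $|\D|=4$ to appear: with $\D=\F_4$ and Frobenius, there are too few scalars to separate the equations, and a non-local homomorphism exists.

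To get around this, I will first try to use three distinct values $t\in\{0,1,\lambda\}$ together with $\alpha$ and $\mu\alpha\mu^\star$ for various $\mu\in\D^\times$, so that the equations span enough. The fact that $A$ is stable under $\alpha\mapsto\mu\alpha\mu^\star$ provides the needed supply whenever $|\D|>4$. Once $n=2$ is done, the inductive step for $n\geq3$ should be comparatively routine: each pair of indices gives a $2\times 2$ principal subspace on which $F$ is local, and compatibility on overlaps glues the local vectors into a single $X\in\D^n$.
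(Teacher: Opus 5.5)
Your reduction for $n\geq 3$ through $2\times 2$ principal blocks is fine wherever the $2\times 2$ case holds. However, the proposal has two genuine gaps.

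\textbf{The case $n=2$ is not proved.} This case carries all the difficulty, and the step you call key is only asserted (``should yield'', ``should force''). The separation trick you offer for characteristic $2$ cannot work. All maps involved are merely additive, so comparing values at $t\in\{0,1,\lambda\}$ separates nothing unless $t$ lies in the prime field. That is exactly how the characteristic $\neq 2$ case goes (with $t\in\F_0$, $|\F_0|>2$), but in characteristic $2$ the prime field is $\{0,1\}$. The congruence substitution $\alpha\mapsto\mu\alpha\mu^\star$ you mention is the right tool, but you never derive the equation it produces, let alone solve it. Here is what is missing.
\begin{enumerate}[(1)]
\item Write $\Phi$ through additive maps $\lambda,\mu$ (diagonal) and $f,g$ (off-diagonal). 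Testing it on the rank-one matrices $\begin{bmatrix} a^{-1} & -x^\star \\ x & -xax^\star\end{bmatrix}$ gives $f(x)+\mu(-xax^\star)=xa\bigl(\lambda(a^{-1})-g(x)^\star\bigr)$.
\item In characteristic $2$, polarize in $x$ and substitute $(xz,yz,z^{-1}a(z^\star)^{-1})$ for $(x,y,a)$. This shows that $\varphi_z : x\mapsto g(xz)z^{-1}-g(x)$ satisfies $xa\varphi_z(y)^\star=ya\varphi_z(x)^\star$ for all $x,y\in\D$ and $a\in A$.
\item That functional equation forces $\varphi_z=0$ when $\D$ is noncommutative. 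Hence $g=0$, and symmetrically $f=0$, after normalizing $f(1)=g(1)=0$. When $\D$ is commutative it forces $\varphi_z(x)=c_z\,x^\star$.
\item In the noncommutative case, the diagonal maps then coincide and are root-linear: $\lambda(xax^\star)=x\lambda(a)$. A root-linear map vanishes on $A$ in characteristic $2$, because it kills $(x+1)a(x+1)^\star-xax^\star-a=xa+ax^\star$, and these elements exhaust $A$. Nothing in your sketch plays this role, and without it the diagonal part is never eliminated.
\item In the commutative case, after adjusting by a local map, one ends with $f(x)=c\,x^\star$, $g(x)=c^\star x^\star$, $\lambda=\mu=0$, and $c\,x^2=c\,x^\star$ for all $x$. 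So either $\Phi=0$, or $x^4=x$ on $\D$, i.e.\ $|\D|=4$.
\end{enumerate}
Step (5) is where the exception comes from, not a shortage of congruence scalars. Your claim that things work ``whenever $|\D|>4$'' is not supported by anything you wrote.

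\textbf{The case $|\D|=4$, $n\geq 3$ is not reached.} The statement covers it, but your induction cannot. Here $\D=\F_4$ with the Frobenius involution, $A=H=\F_2$, and $\Matah_n(\F_4)=\MatH_n(\F_4)$. The base case fails: $\Phi_0$ is a non-local range-compatible homomorphism on $\Matah_2(\F_4)$. So for $n=3$ neither your restriction to $\Matah_{n-1}(\D)$ nor your final gluing of local $2\times 2$ pieces is available. What is needed is the following.
\begin{enumerate}[(1)]
\item The full classification for $n=2$: local maps plus scalar multiples of $\Phi_0$.
\item A genuinely $3\times 3$ argument. Subtract a local map so that $\Phi(E_{i,i})=0$; then each localized map becomes $\Phi_{i,j}=\Phi_0\,c_{i,j}$. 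The all-ones matrix has rank one, and this forces the three constants to be equal. Next take the rank-one matrix $X^\star X$ with $X=\begin{bmatrix}1 & x & x\end{bmatrix}$ and $x\in\F_4\setminus\F_2$. Its image is $c\begin{bmatrix}0 & x+1 & x+1\end{bmatrix}^T$, which must lie in the span of $X^\star$; this forces the common constant to vanish.
\item For $n\geq 4$, localize to $3\times 3$ principal blocks rather than $2\times 2$ ones.
\end{enumerate}
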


Now we turn to the difficult case where $\car(\D)=2$, for Hermitian matrices (alternate-Hermitian matrices are entirely handled by Theorems 
\ref{theo:mainaltHermitian} and \ref{theo:alternate} unless $n=2$, $(-)^\star$ is of the second kind and $|\D|=4$).
This involves the following unusual notion:
a mapping $\alpha : H \rightarrow \D$ is called \textbf{root-linear} whenever it is a group homomorphism and
$$\forall (x,a) \in \D \times H, \; \alpha (xax^\star)=x \alpha(a),$$
and we similarly define root-linear mappings from $A$ to $\D$.

We will see shortly (Lemma \ref{lemma:rootlinear}) that every such map must vanish on $A$ provided $\car(\D)=2$, and hence only the zero mapping is root-linear if $(-)^\star$ is of the second kind.
If $\D$ is commutative with characteristic $2$ and $(-)^\star$ is the identity, then the
root-linear mappings are simply the linear forms on the $\D$-vector space with underlying group $(\D,+)$
and scalar multiplication $\lambda\cdot x:=\lambda^2 x$. Hence, nonzero root-linear mappings always exist in that case
(and if $\D$ is perfect those forms constitute a $1$-dimensional $\D$-linear subspace of $\calF(\D,\D)$).

For any square matrix $M=(m_{i,j}) \in \Mat_n(\D)$, we denote by 
$$\Delta(M)=\begin{bmatrix}
m_{1,1} & \cdots & m_{n,n}
\end{bmatrix}^T \in \D^n$$ 
its diagonal vector.

\begin{theo}\label{theo:specialHermitianbis}
Assume that $\car(\D)=2$. Then for every root-linear mapping $\alpha : H \rightarrow \D$,
the mapping $M \in \MatH_n(\D) \mapsto \Delta(M)^\alpha$ is a range-compatible homomorphism, and it is local
only if $\alpha=0$.
\end{theo}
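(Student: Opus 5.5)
The map $F : M \mapsto \Delta(M)^\alpha$ (apply $\alpha$ to each diagonal entry) is additive because the diagonal entries of a Hermitian matrix lie in $H$ and $\alpha$ is additive; the work is in range-compatibility and non-locality. For range-compatibility I will use the duality description of the column space: a vector $y \in \D^n$ lies in the range of $M$ if and only if $Y^\star y=0$ for every $Y \in \D^n$ with $Y^\star M=0$. Since $M$ is Hermitian, $Y^\star M=0$ is equivalent to $MY=0$ (apply $(-)^\star$). So it suffices to show that for every $X=(x_1,\dots,x_n)^T$ in the kernel of $M$,
$$X^\star \Delta(M)^\alpha=\sum_{i=1}^n x_i^\star\, \alpha(m_{i,i})=0.$$

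The key step is to rewrite this sum as $\alpha$ of a single element. Root-linearity applied with $x_i^\star$ in place of $x$ gives $\alpha(x_i^\star m_{i,i} x_i)=x_i^\star \alpha(m_{i,i})$; this also covers $x_i=0$ since $\alpha(0)=0$. Hence the sum equals $\alpha\bigl(\sum_i x_i^\star m_{i,i}x_i\bigr)$. On the other hand, expanding $X^\star M X$ and using $m_{j,i}=m_{i,j}^\star$ gives
$$0=X^\star MX=\sum_i x_i^\star m_{i,i}x_i+\sum_{i<j}\bigl(x_i^\star m_{i,j}x_j+(x_i^\star m_{i,j}x_j)^\star\bigr).$$
In characteristic $2$, each bracket has the form $z-z^\star$, so the second sum lies in $A$. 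Therefore $\sum_i x_i^\star m_{i,i}x_i \in A$, and Lemma \ref{lemma:rootlinear} (root-linear maps vanish on $A$ in characteristic $2$) gives $\alpha$ of it equal to $0$. This is the step I expect to carry the argument; the main point to check is that the cross terms really are alternate-Hermitian and not merely Hermitian, which holds because $-1=1$.

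For non-locality, assume $F(M)=MX$ for all $M$, with $X=(x_1,\dots,x_n)^T$ (here $n\geq 2$). For distinct $i,j$, the matrix $E_{i,j}+E_{j,i}$ is Hermitian with zero diagonal, so $F$ vanishes on it. Its product with $X$ has $x_j$ in row $i$ and $x_i$ in row $j$, so $x_i=x_j=0$. Hence $X=0$, so $F=0$. Then for every $a \in H$, $0=F(aE_{1,1})=\alpha(a)E_1$, which gives $\alpha=0$. Conversely, $\alpha=0$ obviously yields the local map $M\mapsto M\cdot 0$.
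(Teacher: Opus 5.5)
Your proof is correct, but for range-compatibility it takes a genuinely different route from the paper.

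The paper argues by congruence:
\begin{enumerate}[(i)]
\item It first proves the equivariance $\Phi(PMP^\star)=P\,\Phi(M)$ for every $P\in\GL_n(\D)$. To do so it splits $M$ into its diagonal part plus an element of $\Matah_n(\D)$; the latter is killed by $\Phi$ thanks to Lemma \ref{lemma:rootlinear}.
\item It then invokes the classical theorem that a Hermitian matrix with $\Delta(M)\neq 0$ is congruent to a diagonal one. This is a citation of Knus when $(-)^\star\neq\id$, and of the result on non-alternating symmetric forms otherwise.
\item It treats $\Delta(M)=0$ separately and concludes by inspection on diagonal matrices.
\end{enumerate}

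You instead test $\Delta(M)^\alpha$ against the left annihilator of the column space. By Hermitian symmetry this annihilator is $\{Y^\star : MY=0\}$, and the single identity $X^\star MX=0$ for $X\in\ker M$ then does all the work. Root-linearity turns $X^\star\Delta(M)^\alpha$ into $\alpha\bigl(\sum_i x_i^\star m_{i,i}x_i\bigr)$. The off-diagonal terms pair up as $z+z^\star=z-z^\star\in A$, so Lemma \ref{lemma:rootlinear} finishes.

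What your route buys is self-containment. It needs no diagonalization theorem, no distinction between the identity and a non-identity involution, and no separate case $\Delta(M)=0$, and it works verbatim for every $n\geq 1$. Its only external input is the standard double-annihilator property of right subspaces of $\D^n$ over a division ring. The paper's route, in exchange, yields the equivariance $\Phi(PMP^\star)=P\,\Phi(M)$ as a structural by-product.

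Your non-locality argument is the paper's, up to using $aE_{1,1}$ instead of $aI_n$.
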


Here of course, for a list $X=(x_1,\dots,x_n)$ in $H^n$ and a mapping $f$ with source set $H$, we write $X^f$ short for $(f(x_1),\dots,f(x_n))$.

\begin{theo}\label{theo:specialHermitian}
Assume that $\car(\D)=2$ and $(-)^\star$ is of the first kind.
Then every range-compatible mapping on $\MatH_n(\D)$ splits as $M \mapsto MX+\Delta(M)^\alpha$ for a unique pair $(X,\alpha)$ consisting of a vector $X \in \D^n$ and of a root-linear mapping $\alpha : H \rightarrow \D$.
\end{theo}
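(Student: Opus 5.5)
Throughout, $\car(\D)=2$ and $(-)^\star$ is of the first kind, and $F$ is a range-compatible homomorphism on $\MatH_n(\D)$ (the statement says "mapping", but additivity is clearly intended). We argue by induction on $n$, and extract $X$ and $\alpha$ from the restrictions of $F$ to small blocks.

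\textbf{Case $n=1$.} Here $\MatH_1(\D)=H$, and every additive map $H\to\D$ is range-compatible. So this case cannot hold in full generality, and the real statement needs $n\geq 2$; we assume $n\geq 2$ and treat $n=2$ as the base case.

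\textbf{Step 1: diagonal coordinates.} For $a\in H$ and an index $i$, consider the matrix $aE_iE_i^T$. Its range is contained in $E_i\D$, so $F(aE_iE_i^T)=E_i\,\varphi_i(a)$ for an additive map $\varphi_i:H\to\D$.

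\textbf{Step 2: rank-one matrices.} Every rank-one Hermitian matrix has the form $YaY^\star$ with $a\in H$ and $Y\in\D^n$, and range-compatibility gives $F(YaY^\star)\in Y\D$. Take $n=2$ and $Y=E_1+xE_2$. Then
$$YaY^\star=\begin{bmatrix} a & ax^\star\\ xa & xax^\star\end{bmatrix}.$$
Decompose this matrix as the sum of its diagonal part and its off-diagonal part. The off-diagonal part is of the form $bE_1E_2^T+b^\star E_2E_1^T$, and its $F$-value is governed by a single additive map. Call this value $G(b)=(g_1(b),g_2(b))$.

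Range-compatibility on matrices of the form $\begin{bmatrix}0&b\\ b^\star&0\end{bmatrix}$ gives nothing, since they are invertible when $b\neq 0$. We therefore use the rank-one relation
$$\varphi_1(a)+g_1(ax^\star)=\lambda,\qquad g_2(ax^\star)+\varphi_2(xax^\star)=x\lambda,$$
which yields the identity
$$g_2(ax^\star)+\varphi_2(xax^\star)=x\bigl(\varphi_1(a)+g_1(ax^\star)\bigr)$$
for all $a\in H$ and $x\in\D$, and symmetrically with the roles of the two indices exchanged. These functional equations are where the main work lies.

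\textbf{Step 3: solving the functional equations.} Setting $x=1$ and varying $a$ relates $g$ to $\varphi$. Replacing $x$ by $x+1$ and subtracting the equations for $x$ and for $1$ uses that $(x+1)a(x+1)^\star-xax^\star-a=xa+ax^\star$ is a trace element, so it lies in $A$. We aim to show two things:
\begin{itemize}
\item $g_1(b)=b\,x_2$ and $g_2(b)=b^\star x_1$ for fixed scalars $x_1,x_2$ (off-diagonal behaviour is local);
\item $\varphi_i(a)-a\,x_i=\alpha(a)$ for one common map $\alpha$ independent of $i$.
\end{itemize}
The expected route is to show that $b\mapsto g_1(b)$ is right-$\D$-linear by exploiting arbitrary $x$, using the fact that $ax^\star$ ranges over all of $\D$ as $x$ varies. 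After subtracting the local map $M\mapsto MX$, the identity becomes $\psi_2(xax^\star)=x\psi_1(a)$ with $g$ removed. Putting $x=1$ gives $\psi_1=\psi_2=:\alpha$, and then $\alpha(xax^\star)=x\alpha(a)$, which is exactly root-linearity.

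\textbf{Main obstacle.} The hard step is proving the linearity of $g$ over a noncommutative $\D$. I would first attempt it by expanding the equation with $x$ replaced by $x+y$, obtaining cross terms $xay^\star+yax^\star$, and then using Lemma \ref{lemma:rootlinear}-type arguments to show that $\varphi$ kills $A$ after correction.

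\textbf{Step 4: induction.} For $n\geq 3$, apply the $n=2$ result to each principal $2\times 2$ block $\{i,j\}$, viewed inside the subspace of matrices supported on rows and columns $i,j$. The scalars $x_i$ and the map $\alpha$ obtained from overlapping pairs agree, so they glue into a global $X$ and $\alpha$. Subtracting $M\mapsto MX+\Delta(M)^\alpha$, which is range-compatible by Theorem \ref{theo:specialHermitianbis}, we get a range-compatible $F'$ that vanishes on all matrices supported on a single $2\times 2$ block. Since these matrices generate $\MatH_n(\D)$ additively, additivity forces $F'=0$.

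\textbf{Uniqueness.} If $MX+\Delta(M)^\alpha=0$ for all $M$, then $\Delta(M)^\alpha=-MX$ is local, so $\alpha=0$ by Theorem \ref{theo:specialHermitianbis}. Then $MX=0$ for all $M$, and taking $M=E_iE_i^T$ gives $X=0$.
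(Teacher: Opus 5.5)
Your proposal has a genuine gap exactly where you place the ``main obstacle''. Step~3 states the goal ($g_1,g_2$ local) but does not prove it, and the route you sketch is circular. Showing that $\varphi_2$ vanishes on $A$ via Lemma~\ref{lemma:rootlinear} needs root-linearity, and you only get root-linearity after $g$ has been removed. Concretely, normalize $g_1(1)=0$ and take your polarized identity
\[
\varphi_2(xay^\star+yax^\star)=x\,g_1(ay^\star)+y\,g_1(ax^\star)\qquad (x,y\in\D,\ a\in H)
\]
at $a=y=1$. This gives $g_1(b)=\varphi_2(b+b^\star)$, so ``$\varphi_2$ kills $A$'' and ``$g_1=0$'' are the same statement.

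The missing idea, which the paper uses in its parametrization by $f,g,\lambda,\mu$, is congruence rescaling combined with noncommutativity:
\begin{itemize}
\item The left-hand side above is unchanged under $(x,y,a)\mapsto(xz,yz,z^{-1}a(z^\star)^{-1})$ for $z\in\D^\times$.
\item Hence $h_z(b):=z\,g_1(z^{-1}b)-g_1(b)$ satisfies $x\,h_z(ay^\star)=y\,h_z(ax^\star)$, the same kind of equation as in Lemma~\ref{lemma:basic}.
\item Taking $a=1$ and $\psi(x):=h_z(x^\star)$ gives $x\psi(y)=y\psi(x)$, hence $\psi(y)=y\psi(1)$ and $(xy-yx)\psi(1)=0$.
\item So if $\D$ is noncommutative, $h_z=0$, i.e.\ $g_1(wb)=w\,g_1(b)$ and $g_1(b)=b\,g_1(1)$; symmetrically for $g_2$.
\item If $\D$ is commutative, first kind forces $(-)^\star=\id$, and polarizing at $a=1$ gives linearity directly (the paper's Subsubcase~2.2.1).
\end{itemize}
Only after this does your final reduction to $\psi_2(xax^\star)=x\psi_1(a)$ go through.

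The gluing in Step~4 is also unjustified. Blocks $\{i,j\}$ and $\{i,j'\}$ overlap only in the $(i,i)$ entry. There the split into a local part and a $\Delta^\alpha$ part need not be unique, which is the $n=1$ phenomenon you noticed yourself. Over $\D=\F_2$ with the identity involution, every additive map $H\to\D$ is root-linear, so the overlap only fixes $x_i+\alpha(1)$.

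Here is a concrete failure. Take the additive map on $\Mats_3(\F_2)$ given by
\begin{itemize}
\item $E_{1,1}\mapsto E_1$, $E_{2,2}\mapsto E_2$, $E_{3,3}\mapsto 0$;
\item $E_{1,2}+E_{2,1}\mapsto 0$, $E_{1,3}+E_{3,1}\mapsto E_3$, $E_{2,3}+E_{3,2}\mapsto E_3$.
\end{itemize}
It restricts to $N\mapsto\Delta(N)$ on block $\{1,2\}$ and to local maps on the other two blocks, with conflicting values of $x_1$ and $\alpha$. Yet it sends $E_{1,2}+E_{2,1}+E_{1,3}+E_{3,1}$ to $E_3$, which is outside that matrix's range.

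So you need range-compatibility on a matrix involving three indices. The paper uses $E_{j,i}+E_{j',i}+E_{i,j}+E_{i,j'}$, whose range is spanned by $E_i$ and $E_j+E_{j'}$, to get $a_{i,j}=a_{i,j'}$. Only then does it match the $\alpha_{i,j}$ through the diagonal entries. For $|\D|>2$ the diagonal overlap alone would suffice, because $a\mapsto ac$ is root-linear only for $c=0$, but you would have to say so.
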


This last result was already known in the commutative case, see section 3 of \cite{dSPRC1}.

We finish with the very special case where $n=2$, $|\D|=4$ and $(-)^\star$ is of the second kind. In that case there is no nonzero root-linear mapping
on $H$ (see Lemma \ref{lemma:rootlinear}) yet 
$$\Phi_0 : \begin{cases}
\MatH_2(\D) & \longrightarrow \D^2 \\
\begin{bmatrix}
a & x^\star \\
x & b
\end{bmatrix} & \longmapsto 
\begin{bmatrix}
x \\
x^\star
\end{bmatrix}
\end{cases}
$$
is a range-compatible homomorphism. That $\Phi_0$ is range-compatible can of course be verified by a case-by-case study
(there are only $16$ relevant matrices), but can also be deduced from the previous type of counterexample.
Let indeed $M=\begin{bmatrix}
a & x^\star \\
x & b
\end{bmatrix} \in \MatH_2(\D)$ be singular and such that $\Phi_0(M) \neq 0$. The latter yields $x \neq 0$, so $xx^\star=1$
and hence $ab=1$ because $M$ is singular, so $a=b=1$.
Then we observe that $\Phi_0(M)=\sqrt{\Delta(N)}$ for $N:=\begin{bmatrix}
x^\star & 1 \\
1 & x
\end{bmatrix}$, which is deduced from $M$ by permuting the two columns and hence has the same column space.
Then as $N$ is a symmetric matrix and $\D$ is commutative with characteristic $2$ we know from Theorem 
\ref{theo:specialHermitianbis} that $\sqrt{\Delta(N)}$ is in the column space of $N$, which is the one of $M$.

Finally, $\Phi_0$ is not local. Indeed, if $\Phi_0 : M \mapsto MX$ for some $X \in \D^2$, then $X=I_2 X=\Phi_0(I_2)=0$,
so $\Phi_0$ would vanish, which is obviously false.

In contrast with Theorem \ref{theo:specialHermitianbis}, this special case has no extension to greater dimensions, as 
seen in Theorem \ref{theo:mainHermitian}.

\begin{theo}\label{theo:specialHermitianF4}
Let $\F$ be a field with $4$ elements, endowed with its non-identity involution.
Then the range-compatible homomorphisms on $\MatH_2(\F)$
are the sums of the local maps and of the scalar multiples of $\begin{bmatrix}
a & x^\star \\
x & b
\end{bmatrix} \mapsto \begin{bmatrix}
x \\
x^\star
\end{bmatrix}$.
\end{theo}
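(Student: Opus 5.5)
We want the range-compatible homomorphisms on $\MatH_2(\F)$ with $\F=\mathbb{F}_4$ and $x^\star=x^2$. Here $H=\mathbb{F}_2$, and $\MatH_2(\F)$ is the set of matrices $\begin{bmatrix} a & x^\star\\ x & b\end{bmatrix}$ with $a,b\in\{0,1\}$ and $x\in\F$. So $\MatH_2(\F)$ is a group of order $16$, that is, an $\mathbb{F}_2$-vector space of dimension $4$. A homomorphism to $\F^2$ is therefore an $\mathbb{F}_2$-linear map into an $\mathbb{F}_2$-space of dimension $4$, and the problem reduces to finite linear algebra over $\mathbb{F}_2$.

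The inclusion "$\supseteq$" is already established. Local maps are range-compatible homomorphisms, $\Phi_0$ was shown to be one in the discussion above, and range-compatible homomorphisms form a right $\F$-subspace. Hence $\{M\mapsto MX+\Phi_0(M)\lambda\}$ is a space of range-compatible homomorphisms. This family is a right $\F$-space of dimension $3$: the map $X\mapsto (M\mapsto MX)$ is injective (evaluate at $I_2$), and $\Phi_0$ is not local (shown above), so the span has dimension $2+1=3$.

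For "$\subseteq$", take a range-compatible homomorphism $F$. Set $X:=F(I_2)$ and replace $F$ by $F-(M\mapsto MX)$, so that $F(I_2)=0$. Then:
\begin{itemize}
\item The singular rank-one matrix $E_{1,1}$ forces $F(E_{1,1})\in \F E_1$. The matrix $E_{2,2}=I_2-E_{1,1}$ forces $F(E_{2,2})=F(E_{1,1})\in\F E_2$. Hence $F(E_{1,1})=F(E_{2,2})=0$.
\item Next consider $G_x:=\begin{bmatrix}0&x^\star\\x&0\end{bmatrix}$. The map $x\mapsto F(G_x)$ is additive, that is, $\mathbb{F}_2$-linear from $\F$ to $\F^2$, and is determined by $F(G_1)$ and $F(G_\omega)$ with $\omega$ a primitive element.
\item The remaining constraints come from the singular matrices $\begin{bmatrix}1&x^\star\\x&1\end{bmatrix}$ with $x\neq 0$ (since $xx^\star=1$). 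Their column space is spanned by $\begin{bmatrix}1\\x\end{bmatrix}$, and additivity gives $F$ of this matrix as $F(G_x)$. So we need $F(G_x)\in \begin{bmatrix}1\\x\end{bmatrix}\F$ for $x\in\{1,\omega,\omega^2\}$.
\item Write $F(G_x)=\begin{bmatrix}u(x)\\v(x)\end{bmatrix}$. The constraint becomes $v(x)=x\,u(x)$ for all $x$, with $u$ additive. Both $x\mapsto xu(x)$ and $v$ must then be additive.
\end{itemize}

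Since $u$ is $\mathbb{F}_2$-linear on $\F$, we can write $u(x)=px+qx^2$ for unique $p,q\in\F$. Then $xu(x)=px^2+qx^3$. Additivity forces the non-additive term $qx^3$ to vanish, so $q=0$: indeed $x^3$ is $1$ on $\F^\times$ and not additive. Hence $u(x)=px$ and $v(x)=px^2=px^\star$, so $F(G_x)=\begin{bmatrix}x\\x^\star\end{bmatrix}p$, and $F=\Phi_0\,p$ on all generators, hence everywhere. I expect no real obstacle. The only delicate points are checking that the listed singular matrices give all the constraints, which is harmless since we only need necessary conditions, and the additivity argument with $x^3$.
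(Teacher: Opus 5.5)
Your proof is correct, and it takes a genuinely different route from the paper. The paper never treats $\mathbb{F}_4$ on its own. It obtains Theorem \ref{theo:specialHermitianF4} as the exceptional outcome of Subsubcase 2.2.2 of its uniform $2\times 2$ analysis, which proceeds as follows:
\begin{enumerate}[(i)]
\item It normalizes the off-diagonal data by $f(1)=g(1)=0$.
\item It derives identity \eqref{eq:basicid} from the rank-one matrices $\begin{bmatrix} a^{-1} & x^\star \\ x & xax^\star\end{bmatrix}$ and polarizes.
\item Lemma \ref{lemma:basic} then gives $g(yz)=zg(y)+g(z)y^\star$, hence $g:x\mapsto\beta(x^\star-x)$, and similarly for $f$.
\item It renormalizes to $f=\alpha x^\star$, $g=\beta x^\star$ and kills $\lambda,\mu$ by polarization.
\item It reaches $\beta^\star x^2=\alpha x^\star$. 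This forces either $\alpha=\beta=0$, so the original map is local, or $x^\star=x^2$ for all $x$, i.e.\ $|\D|=4$ and $\Phi=\Phi_0\alpha$.
\end{enumerate}

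You instead exploit finiteness. Normalizing at $I_2$ and using $E_{2,2}=I_2-E_{1,1}$, which is available because $H=\mathbb{F}_2$, disposes of the diagonal in one line. The remaining rank-one matrices $I_2+G_x$ give $v(x)=xu(x)$ directly, because $xx^\star=x^3=1$ for $x\neq 0$. The explicit form $u(x)=px+qx^2$ of additive self-maps of $\mathbb{F}_4$, together with the non-additivity of $x\mapsto x^3$, finishes the argument.

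Your route is shorter and self-contained for this particular statement. Moreover, $E_{1,1}$, $E_{2,2}$ and the three matrices $I_2+G_x$ are the only rank-one matrices in $\MatH_2(\mathbb{F}_4)$, so your conditions are also sufficient. Your computation therefore re-proves that $\Phi_0$ is range-compatible without appealing to Theorem \ref{theo:specialHermitianbis}.

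The paper's route explains why $\mathbb{F}_4$ is the unique exception. The same computation covers every commutative $\D$ of characteristic $2$ with a non-identity involution and shows that all range-compatible homomorphisms are local there. It also runs in parallel with the other $2\times 2$ cases of Theorems \ref{theo:mainHermitian}, \ref{theo:mainaltHermitian} and \ref{theo:specialHermitian}. Your argument cannot give this, since it relies on $H=\mathbb{F}_2$, on $xx^\star=1$ for all nonzero $x$, and on an explicit description of the additive maps of the field.
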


\subsection{Application to range-compatible linear mappings}

We shall now consider the special situation where we fix a linear subspace $\F$ of the center $\calZ$ of $\D$
for which the involution $(-)^\star$ is $\F$-linear (i.e., $\F$ consists only of central Hermitian elements)
and we assume that $\D$ is finite-dimensional as an $\F$-vector space (which requires $\D$ to be finite-dimensional over its center).
Then $\MatH_n(\D)$ and $\Matah_n(\D)$ are $\F$-linear subspaces of $\Mat_n(\D)$, 
and we may ask what are the range-compatible $\F$-linear mappings on them, observing that every local map
is $\F$-linear. The description is then easier than the more general one from the previous section, and it is precisely this
description that is required for applications to spaces of Hermitian matrices with bounded rank.
Note, for the special case of alternating matrices over a field $\F$, that every range-compatible $\F$-linear mapping
from $\Mata_2(\F)$ to $\F^2$ is local: indeed, every such mapping can be written $\begin{bmatrix}
0 & -x \\
x & 0
\end{bmatrix} \mapsto \begin{bmatrix}
ax \\
bx\end{bmatrix}$ for some $(a,b)\in \F^2$, which is the local mapping $M \in \Mata_2(\F) \mapsto MX$ for $X:=\begin{bmatrix}
b \\
-a
\end{bmatrix}$.

Assume now that $|\F|=2$. Then $\car(\D)=2$, $\D$ is finite (and hence commutative, by Wedderburn's little theorem)
and as $\F$ is the prime subfield of $\D$ the range-compatible homomorphisms are the range-compatible $\F$-linear mappings.
Moreover, in that situation and if $(-)^\star$ is the identity, the root-linear mappings are easy to describe:
indeed, $\D$ is perfect so, if we denote by $\sqrt{-}$ the reciprocal of its Frobenius automorphism $\lambda \mapsto \lambda^2$,
then the root-linear mappings from $\D$ to itself are simply the ones of the form $\lambda \mapsto \sqrt{\lambda}\, a$ for some fixed $a \in \D$.
Hence, we recover the fact that the range-compatible homomorphisms on $\Mats_n(\D)$ are the sums of the local maps with the scalar multiples of 
$M \mapsto \sqrt{\Delta(M)}$.
When $(-)^\star$ is not the identity, every root-linear mapping on $H$ is zero (as $H=A$ in that case, and thanks to Lemma \ref{lemma:rootlinear}), 
so every range-compatible linear mapping
on $\MatH_n(\D)$ is local unless $n=2$ and $|\D|=4$, in which case we have the special case of Theorem \ref{theo:specialHermitianF4}.

Finally, let us consider the situation where $|\F|>2$ and $\car(\D)=2$. In particular this rules out the situation where
$|\D|=4$ and the involution is not the identity.
To start with, we take a mapping $\alpha : H \rightarrow \D$ that is both root-linear and $\F$-linear, and we prove that 
it vanishes.
Indeed, let $a \in H$. By the root-linearity, for all $t \in \F$ we get
$\alpha (t at^\star)=t \alpha(a)$, which reads $t^2 \alpha(a)=t \alpha(a)$. Taking $t \in \F \setminus \{0,1\}$
yields $\alpha(a)=0$, and varying $a$ yields the conclusion.
Let then $\Phi : \MatH_n(\D) \rightarrow \D^n$ be both $\F$-linear and decomposable as the sum of a local mapping and of 
$M \mapsto \Delta(M)^\alpha$ for some root-linear mapping $\alpha : H \rightarrow \D$.
Then, for some $b \in \D$ we have $\forall a \in H, \; \Phi(a E_{1,1})=E_1 (ab+\alpha(a))$ for all $a \in V$, 
and hence $a \mapsto ab+\alpha(a)$ is $\F$-linear. It follows that $\alpha$ is $\F$-linear, and hence $\alpha=0$.
Therefore $\Phi$ is local.

Let us conclude:

\begin{theo}[Full description of all range-compatible $\F$-linear mappings]
Let $\F$ be a field, $\D$ be a finite-dimensional $\F$-algebra that is a division ring, and 
$(-)^\star$ be an $\F$-linear involution of $\D$.
Let $n \geq 2$ be an integer.
Then: 
\begin{enumerate}[(a)]
\item Every range-compatible $\F$-linear mapping on $\Matah_n(\D)$ is local.
\item Unless $(-)^\star$ is the identity and $|\F|=2$, or $|\D|=4$ and $|\F|=n=2$, every range-compatible
$\F$-linear mapping on  $\MatH_n(\D)$ is local.
\item If $|\F|=2$ and $(-)^\star$ is the identity then the range-compatible $\F$-linear mappings on 
$\MatH_n(\D)$ are the sums of the local mappings and of the scalar multiples of $M \mapsto \sqrt{\Delta(M)}$.
\item If $|\D|=4$, $(-)^\star$ is not the identity and $n=2$, then the range-compatible $\F$-linear mappings on 
$\MatH_n(\D)$ are the sums of the local mappings with the scalar multiples of the special mapping 
$\Phi_0 : \begin{bmatrix}
a & x^\star \\
x & b
\end{bmatrix} \mapsto \begin{bmatrix}
x \\
x^\star
\end{bmatrix}$.
\end{enumerate}
\end{theo}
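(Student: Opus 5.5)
This theorem is a corollary of Theorems \ref{theo:alternate}, \ref{theo:mainHermitian}, \ref{theo:mainaltHermitian}, \ref{theo:specialHermitian} and \ref{theo:specialHermitianF4}, together with the discussion just before it. The strategy is therefore a case analysis. In each case, a range-compatible $\F$-linear map is in particular a range-compatible homomorphism, so it has the form given by the general classification. We then use $\F$-linearity to remove the non-local parts.

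For (a), first assume $(-)^\star$ is not the identity. Then Theorem \ref{theo:mainaltHermitian} gives locality except when $n=2$ and $|\D|=4$. In that exceptional case $\D$ is the field with four elements and the involution is Frobenius. It is $\F$-linear, so $\F$ lies in the fixed field of the involution, which has two elements; hence $|\F|=2$. Then $\Matah_2(\D)$ consists of the matrices $\begin{bmatrix} a & x^\star\\ x & b\end{bmatrix}$ with $a,b\in A=H=\F$, so $\Matah_2(\D)=\MatH_2(\D)$ here. This case is then handled by (d) below, so the exception genuinely remains. I expect the paper's statement implicitly excludes it, or that its meaning of ``local'' absorbs it; I would flag this point and check carefully that $A=H$ indeed holds when $|\D|=4$. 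Next assume $(-)^\star$ is the identity. Then $\D$ is commutative and $\Matah_n(\D)=\Mata_n(\D)$. For $n\geq 3$ apply Theorem \ref{theo:alternate}. For $n=2$ use the explicit computation given in the text: every $\F$-linear map is $\begin{bmatrix}0&-x\\x&0\end{bmatrix}\mapsto \begin{bmatrix}ax\\bx\end{bmatrix}$. Here one must be careful that $x$ ranges over $\D$ while linearity is only over $\F$. So the candidate map is $x\mapsto (f(x),g(x))$ with $f,g$ $\F$-linear, and range-compatibility is automatic. The $\F$-linear-only case with $[\D:\F]>1$ is thus a potential obstacle. I would resolve it by arguing that the theorem's intended setting for (a) has the involution non-identity or $n\geq 3$, since the text's computation assumes $\D=\F$.

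For (b), (c) and (d), consider first $\car(\D)\neq 2$, or $(-)^\star$ of the second kind away from $n=2$, $|\D|=4$. Then Theorem \ref{theo:mainHermitian} gives locality directly. The remaining case is $\car(\D)=2$ with $(-)^\star$ of the first kind, and Theorem \ref{theo:specialHermitian} writes $\Phi=M\mapsto MX+\Delta(M)^\alpha$. Assume $|\F|>2$. Restricting to $aE_{1,1}$ shows that $\alpha$ is $\F$-linear. Root-linearity with $t\in\F\setminus\{0,1\}$ gives $t^2\alpha(a)=t\alpha(a)$, hence $\alpha=0$. This is the argument displayed before the theorem, and it gives (b) in that case.

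Assume instead $|\F|=2$. If $(-)^\star$ is the identity, $\D$ is a finite field, and root-linear maps are $\lambda\mapsto\sqrt\lambda\,a$; this gives (c). If $(-)^\star$ is of the first kind but not the identity, then $H=A$ because $\F$-linearity forces the relevant structure. Lemma \ref{lemma:rootlinear} then kills $\alpha$, giving (b). Finally, when $|\D|=4$, $n=2$ and the involution is non-identity, Theorem \ref{theo:specialHermitianF4} gives (d). All maps there are $\F$-linear because $\F$ is the prime field.

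The main obstacle is bookkeeping: verifying that the hypotheses of each invoked theorem match the case split, especially the degenerate cases $n=2$ for alternating or alternate-Hermitian matrices.
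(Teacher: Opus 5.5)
Your treatment of (b)--(d) is the paper's own argument. You use Theorem \ref{theo:mainHermitian} when $\car(\D)\neq 2$ or the involution is of the second kind, Theorem \ref{theo:specialHermitian} together with $\alpha(t^2a)=t^2\alpha(a)=t\,\alpha(a)$ for $t\in\F\setminus\{0,1\}$ when $|\F|>2$, the explicit root-linear maps $\lambda\mapsto\sqrt{\lambda}\,a$ for (c), and Theorem \ref{theo:specialHermitianF4} for (d). One repair is needed. Your subcase ``$|\F|=2$, first kind but not the identity'' is not settled by ``$\F$-linearity forces the relevant structure''. The subcase is in fact empty: $\D$ is then finite, hence a field, and an involution of the first kind on a field is the identity.

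On (a), you should stop hedging. Both of your suspicions are genuine counterexamples to (a) as stated, and the discussion preceding the theorem in the paper does not cover them either. Its $2\times 2$ alternating computation assumes $\D=\F$, and Theorem \ref{theo:mainaltHermitian} excludes $n=2$, $|\D|=4$.

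\textbf{First counterexample.} Let $(-)^\star=\id$, $n=2$ and $\dim_\F\D>1$, and pick an $\F$-linear $f:\D\to\D$ with $f(1)=0$ and $f\neq 0$. The map $\begin{bmatrix}0&-x\\x&0\end{bmatrix}\mapsto\begin{bmatrix}f(x)\\0\end{bmatrix}$ is $\F$-linear. It is range-compatible, since every nonzero matrix of $\Mata_2(\D)$ is invertible. But a local map reads $x\mapsto\begin{bmatrix}-x x_2\\ x x_1\end{bmatrix}$, so $x_2=-f(1)=0$, which would force $f=0$.

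\textbf{Second counterexample.} Let $\D=\F_4$ with its Frobenius involution, so necessarily $\F=\F_2$. You are right that $A=H=\F_2$, so $\Matah_2(\D)=\MatH_2(\D)$. Then $\Phi_0$ is a range-compatible $\F$-linear map on $\Matah_2(\D)$ that is not local. The paper shows this: $\Phi_0(I_2)=0$ forces $X=0$. So no reading of ``local'' absorbs this case.

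Hence (a) holds only after these two $n=2$ situations are excluded. Your argument (Theorems \ref{theo:mainaltHermitian} and \ref{theo:alternate}, plus the $2\times 2$ computation when $\D=\F$) proves exactly that corrected statement.
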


In a subsequent article, this result will be used as a critical tool in the study of 
spaces of Hermitian matrices with bounded rank. We will also use it, in a largely different setting, to decipher large spaces of $\F$-diagonalisable square matrices
with entries in $\D$.

\subsection{Structure of the article}

The proofs of the previous results will be performed as follows: we start from the case $n=2$, and then for larger
values of $n$ we use a localization principle to subgroups of $2$-by-$2$ matrices, and finally we piece the results together to obtain the general case.
The hardest part is the case $n=2$, for which the difficulty is concentrated in the special case $\car(\D)=2$.
Before we tackle this proof, we will start with a general study of root-linear mappings (Section \ref{section:rootlinear}),
and in particular we will prove Theorem \ref{theo:specialHermitianbis} and demonstrate that nontrivial root-linear mappings exist on $H$
whenever $A\neq H$ (still assuming that $\D$ has characteristic $2$).

The $2$-by-$2$ case is done in Section \ref{section:rootlinear}, with a proof that is essentially common to all situations considered in Theorems 
 \ref{theo:mainHermitian}, \ref{theo:mainaltHermitian}, \ref{theo:specialHermitian} 
(but still with $n=2$) and \ref{theo:specialHermitianF4}. The last section is devoted to the generalization to larger values of $n$.

\subsection{Additional notation}

When $n \geq 1$ is fixed and $i,j$ are two elements of $\lcro 1,n\rcro$, we denote by $E_{i,j}$ the matrix of $\Mat_n(\D)$
with exactly one nonzero entry, located at the $(i,j)$-spot and equal to $1_\D$.

\section{On root-linear mappings}\label{section:rootlinear}

\subsection{Basics}

Let us start with a basic result.

\begin{lemma}\label{lemma:rootlinear}
Assume that $\car(\D)=2$. Let $\alpha$ be a root-linear mapping on $H$ or on $A$.
Then $\alpha$ vanishes on $A$.
\end{lemma}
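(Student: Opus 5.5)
The plan is to polarize the root-linearity identity by substituting $1+x$ for $x$. The linear cross term then produces exactly the elements of $A$, and they are forced into the kernel.

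First I check the domain issues. Since $\car(\D)=2$, every $y-y^\star$ satisfies $(y-y^\star)^\star=y^\star-y=y-y^\star$. Hence $A\subset H$, so $\alpha$ is defined on $A$ in either case. Moreover, for $a$ in the domain ($H$ or $A$) and $x\in\D$, the element $xa+ax^\star$ belongs to $A$: indeed $(xa)^\star=a^\star x^\star=ax^\star$, because $a$ is Hermitian (using $A\subset H$ when the domain is $A$). Thus $xa+ax^\star=xa-(xa)^\star$ in characteristic $2$. The congruence $xax^\star$ also lies in the domain, since both $H$ and $A$ are invariant under congruence.

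Next, fix $a$ in the domain and $x\in\D$, and apply root-linearity with $1+x$:
$$(1+x)a(1+x)^\star=a+(xa+ax^\star)+xax^\star .$$
Each of the three summands lies in the domain, so additivity of $\alpha$ gives
$$\alpha(a)+\alpha(xa+ax^\star)+\alpha(xax^\star)=(1+x)\alpha(a)=\alpha(a)+x\alpha(a).$$
Root-linearity also gives $\alpha(xax^\star)=x\alpha(a)$. Cancelling yields $\alpha\bigl(xa-(xa)^\star\bigr)=0$ for all $x\in\D$ and all $a$ in the domain.

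Finally, I choose a nonzero $a$ in the domain: $a=1$ if the domain is $H$, and any nonzero element of $A$ if the domain is $A$ (if $A=\{0\}$ the statement is trivial). Given $y\in\D$, set $x:=ya^{-1}$, so that $xa=y$. This gives $\alpha(y-y^\star)=0$ for every $y\in\D$, i.e.\ $\alpha$ vanishes on $A$.

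There is no real obstacle here. The only point needing care is that all intermediate terms lie in the domain of $\alpha$, so that additivity may be applied; this is the membership check made in the first step.
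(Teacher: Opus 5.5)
Your proof is correct and is essentially the paper's argument. You polarize root-linearity at $x+1$ to get $\alpha(xa+ax^\star)=0$, then use invertibility of a nonzero $a$ to sweep out all of $A$. The only cosmetic difference is that you take $a=1$ when the domain is $H$, whereas the paper uses a single nonzero $e\in A$ for both domains; that works because all the terms involved then stay in $A\subset H$.
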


\begin{proof}
The result is obvious if $A=\{0\}$, so we assume that $A$ contains a nonzero element $e$.
Since $\alpha$ is root-linear, we have
$$\forall x \in \D, \; \alpha(xe+ex^\star)=\alpha((x+1)e(x+1)^\star-xex^\star-e)=(x+1)\alpha(e)-x \alpha(e)-\alpha(e)=0.$$
As $e \in \D^\times$, we see that $A=\{(xe)-(xe)^\star \mid x \in \D\}=\{xe+ex^\star \mid x \in \D\}$, and we recover the claimed result.
\end{proof}

Now, let us assume that $\car(\D)=2$, and let us take a root-linear mapping $\alpha$ on $H$.
Then by the previous result $\alpha$ induces a group homomorphism from $H/A$ to $\D$.
Next, we observe that we can endow the quotient abelian group $H/A$ with a structure of left-$\D$-vector space as follows:
the scalar multiplication is simply defined by $\lambda.[h]:=[\lambda h \lambda^\star]$, which is well-defined
because $A$ is invariant under congruence. Checking most of the axioms of left-$\D$-vector spaces is straightforward, with the exception of
the identity $\forall (\lambda,\mu)\in \D^2, \; \forall x \in H/A, \; (\lambda+\mu).x=\lambda.x+\mu.x$, which we will carefully do:
so, let $\lambda,\mu$ belong to $\D$, and let $h \in H$. Then $(\lambda+\mu) h(\lambda+\mu)^\star=\lambda h\lambda^\star+\mu h\mu^\star
+y$ for $y:=\lambda h\mu^\star+\mu h \lambda^\star=(\lambda h \mu^\star)-(\lambda h \mu^\star)^\star \in A$, so we have
$(\lambda+\mu).[h]=\lambda.[h]+\mu.[h]$.
Let us come back to $\alpha$: It induces a group homomorphism $\overline{\alpha} : H/A \rightarrow \D$, 
and the axiom of root-linearity now simply means that $\overline{\alpha}$ is a $\D$-linear mapping between left $\D$-vector spaces.

Conversely, given a nonzero $\D$-linear mapping $\beta : H/A \rightarrow \D$, composing it with the standard projection from $H$ to $H/A$
yields a nonzero root-linear mapping on $H$. By the usual consequences of Zorn's lemma, whenever $A \neq H$ there is a nonzero $\D$-linear mapping from
$H/A$ to $\D$, and consequently there is at least one nonzero root-linear mapping from $H$.
Note however that we have no constructive description of such nonzero mappings, their existence solely results from the Axiom of Choice.
For example, if we take $\D$ as a quaternion division ring over a field $\F$ with characteristic $2$, with the standard quaternion involution,
then $A=\F$ and $H$ is the kernel of the trace, so $A \neq H$, but we have no idea how to construct a root-linear mapping on $H$
without resorting to AC.

\subsection{Root-linear mappings give rise to range-compatible homomorphisms}

Here we shall prove Theorem \ref{theo:specialHermitianbis}.

Assume that $\car(\D)=2$, and let $\alpha : H \rightarrow \D$ be a root-linear mapping.
Let $n \geq 1$. We consider the mapping 
$$\Phi : M \in \MatH_n(\D) \longmapsto \Delta(M)^\alpha.$$
Obviously, it is a group homomorphism. To see that it is range-compatible, we will study first how it behaves with respect to congruence transformations.
So, let $P \in \GL_n(\D)$ and $M=(m_{i,j}) \in \MatH_n(\D)$.
First of all, since $\D$ has characteristic $2$ we can split $M=D+N$ where $D=\mathrm{Diag}(m_{1,1},\dots,m_{n,n})$
and $N \in \Matah_n(\D)$. Then $PMP^\star=PDP^\star+PNP^\star$, with $PNP^\star \in \Matah_n(\D)$. Since the mapping $\alpha$
is root-linear, it vanishes on $A$ and hence $\Phi(PNP^\star)=0$. Therefore,
$$\Phi(PMP^\star)=\Phi(PDP^\star).$$
Next, 
$$\Delta(PDP^\star)=\left(\sum_{j=1}^n p_{i,j}\, m_{j,j}\, p_{i,j}^\star\right)_{1 \leq i \leq n.}$$
By the root-linearity of $\alpha$, this yields
$$\Phi(PMP^\star)=\Phi(PDP^\star)=\left(\sum_{j=1}^n p_{i,j}\, \alpha(m_{j,j})\right)_{1 \leq i \leq n}=P \Phi(M).$$
In particular, $\Phi(M)$ belongs to the column space of $M$ if and only if $\Phi(PMP^\star)$ belongs to the one of $PMP^\star$
(as the latter is the column space of $PM$, by the invertibility of $P^\star$).

Now, take an arbitrary $M \in \MatH_n(\D)$.
Assume first that $\Delta(M)=0$. Then $\Phi(M)=0$ belongs to the column space of $M$.
Assume next that $\Delta(M) \neq 0$. Then it is known that $M=PD P^\star$
for some diagonal matrix $D$ and some $P \in \GL_n(\D)$
(see, e.g, proposition 6.4.2 in \cite{Knus} when $(-)^\star$ is not the identity, and 
theorem 3.0.13 in \cite[Chapter XXXV]{invitquad} otherwise).

By the initial remark, it suffices to observe that $\Phi(D)$ belongs to the column space of $D$, which is obvious
because it has nonzero entries only in positions for which $\Delta(D)$ has nonzero entries. This completes the proof.

Next, assume that $n \geq 2$ and $\Phi=0$. Assume that $\Phi$ equals the local map $M \mapsto MX$ for some $X \in \D^n$, which we write $X=\begin{bmatrix}
x_1 & \cdots & x_n
\end{bmatrix}^T$.
Let $i,j$ be distinct elements of $\lcro 1,n\rcro$.
Then for $M:=E_{i,j}+E_{j,i} \in \MatH_n(\D)$ we have $\Delta(M)=0$ so $MX=0$, which yields $x_i=0$.
Varying $i$ and $j$ yields $X=0$, and then $\Phi=0$.
Finally $0=\Phi(a.I_n)=\begin{bmatrix}
\alpha(a) & \cdots & \alpha(a)
\end{bmatrix}^T$ for all $a \in H$, and we deduce that $\alpha=0$.

Note that the previous result allows us to prove the uniqueness part in Theorem \ref{theo:specialHermitian}. 
Let indeed $\alpha,\beta$ be root-linear maps from $H$ to $\D$, and vectors $X,Y$ in $\D^n$ 
such that $\forall M \in \MatH_n(\F), \; MX+\Delta(M)^\alpha= MY+\Delta(M)^\beta$.
Then clearly $\beta-\alpha$ is root-linear and $\forall M \in \MatH_n(\F),\; M(X-Y)=\Delta(M)^{\beta-\alpha}$,
which leads to $X-Y=0$ and $\beta-\alpha=0$. Hence the claimed uniqueness.

\section{The $2$-by-$2$ case}

Here, we simultaneously prove Theorems \ref{theo:mainaltHermitian}, \ref{theo:mainHermitian} and \ref{theo:specialHermitian}
in the special case $n=2$, as well as Theorem \ref{theo:specialHermitianF4}.

Set $\calM=\MatH_2(\D)$ (respectively, $\calM=\Matah_2(\D)$), 
$\varepsilon=1$ and $V=H$ (respectively, $\varepsilon=-1$ and $V=A$).
In any case $V$ is invariant under congruence. It is also invariant under inversion: This is obvious when $V=H$, but maybe less so if $V=A$.
In the latter case, letting $y \in A \setminus \{0\}$, we observe that $y=-y^\star$ to get $y^{-1}=y^{-1} (-y) (y^{-1})^\star \in V$.

Throughout, we assume that $V \neq \{0\}$.

\subsection{A lemma}

The proof involves the following lemma, which we will prove immediately:

\begin{lemma}\label{lemma:basic}
The mappings $\psi : \D \rightarrow \D$ such that 
$x a \psi(y)^\star=y a \psi(x)^\star$ for all $(x,y,a)\in \D^2 \times V$ are:
\begin{itemize}
\item The zero mapping only, if $\D$ is noncommutative.
\item The mappings of the form $x \mapsto \alpha x^\star$, with $\alpha \in \D$, if $\D$ is commutative.
\end{itemize}
\end{lemma}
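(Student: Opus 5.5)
First I check that the listed mappings do satisfy the identity when $\D$ is commutative. For $\psi(x)=\alpha x^\star$ we get $\psi(y)^\star=y\alpha^\star$. Hence $xa\psi(y)^\star=xay\alpha^\star$ and $ya\psi(x)^\star=yax\alpha^\star$, and these agree by commutativity.

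For the converse, I fix a nonzero $a\in V$, which exists since $V\neq\{0\}$. Putting $y=1$ in the identity gives $xa\psi(1)^\star=a\psi(x)^\star$, so
$$\psi(x)^\star=a^{-1}xa\,\beta \quad\text{for every } x \in \D,$$
with $\beta:=\psi(1)^\star$. Substituting this back, the identity becomes
$$x a\, a^{-1} y a \beta = y a\, a^{-1} x a \beta, \quad\text{that is}\quad xya\beta=yxa\beta \quad\text{for all } (x,y)\in\D^2.$$

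If $\beta\neq 0$, then $a\beta$ is invertible, so $xy=yx$ for all $x,y$ and $\D$ is commutative. Contrapositively, if $\D$ is noncommutative then $\beta=0$, hence $\psi(x)^\star=0$ for every $x$ and $\psi=0$.

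If $\D$ is commutative, the formula simplifies to $\psi(x)^\star=x\beta$. Applying the involution gives $\psi(x)=\beta^\star x^\star$, which has the required form with $\alpha:=\beta^\star$.

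I do not expect any real obstacle. The only points needing care are the order of the factors under the involution, namely $(xa\beta)^\star$-type manipulations in the noncommutative setting, and the fact that cancelling $a\beta$ on the right requires it to be nonzero, which is exactly where $V\neq\{0\}$ is used.
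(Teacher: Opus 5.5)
Your proof is correct and follows essentially the same route as the paper: specialise one variable to $1$ to get $\psi(x)^\star=a^{-1}xa\,\psi(1)^\star$, then substitute back to obtain $xya\,\psi(1)^\star=yxa\,\psi(1)^\star$. The only difference is cosmetic. You keep $\beta=\psi(1)^\star$ as a parameter and split on $\beta=0$ versus $\beta\neq 0$, whereas the paper assumes $\psi\neq 0$, shows $\psi(1)\neq 0$, and rescales to $\psi(1)=1$; your version makes the case split slightly more direct.
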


\begin{proof}
To start with, it is obvious that $x \mapsto \alpha x^\star$ has the required property if $\D$ is commutative, for all 
$\alpha \in \D$.

Now, we assume that we have a nonzero mapping $\psi : \D \rightarrow \D$ such that 
$x a \psi(y)^\star=y a \psi(x)^\star$ for all $(x,y,a)\in \D^2 \times V$.
We shall prove that $\D$ is commutative and that $\psi : x \mapsto \alpha x^\star$ for some $\alpha \in \D$.
Note that no generality is lost (for these aims) in replacing $\psi$ with $\beta \psi$ for an arbitrary $\beta \in \D^\times$.

Choose now $a \in V \setminus \{0\}$.
Obviously $\psi(0)=0$ (take $x=1$ and $y=0$).
Choose then $y \in \D^\times$ such that $\psi(y) \neq 0$. We recover $a \psi(y)^\star=y a \psi(1)^\star$, which yields $\psi(1) \neq 0$.
As we can replace $\psi$ with $\psi(1)^{-1} \psi$, we lose no generality in assuming that $\psi(1)=1$.
Then we gather from the previous identity that 
\begin{equation}\label{eq:basiclemma}
\forall y \in \D, \; \psi(y)^\star=a^{-1} ya.
\end{equation}
Hence, for all $x,y$ in $\D$ we now have
$$xya=xa \psi(y)^\star=ya \psi(x)^\star=yx a,$$
and since $a \neq 0$ this yields that $\D$ is commutative. And then by \eqref{eq:basiclemma} we find $\psi(y)=y^\star$ for all $y \in \D$, so we have the second part of the stated
outcome.
\end{proof}

\subsection{Main proof}

Now, let $\Phi$ be a range-compatible homomorphism on $\calM$. Recall that we assume $V \neq \{0\}$.

Using the range-compatibility assumption on diagonal matrices, we already find group homomorphisms $\lambda$ and $\mu$ from $V$ to $\D$
such that 
$$\forall (a,b) \in V^2,\; \Phi \begin{bmatrix}
a & 0 \\
0 & 0 
\end{bmatrix}=\begin{bmatrix}
\lambda(a) \\
0
\end{bmatrix} \quad \text{and} \quad 
\Phi \begin{bmatrix}
0 & 0 \\
0 & b 
\end{bmatrix}=\begin{bmatrix}
0 \\
\mu(b)
\end{bmatrix},$$
and we also have group homomorphisms $f : \D \rightarrow \D$ and $g : \D \rightarrow \D$ such that 
$$\forall x \in \D, \; \Phi \begin{bmatrix}
0 & \varepsilon x^\star \\
x & 0
\end{bmatrix}=\begin{bmatrix}
\varepsilon g(x)^\star \\
f(x)
\end{bmatrix}.$$
Throughout, we observe that we can subtract an arbitrary local map from $\Phi$ without affecting the end result nor the assumptions, 
which has the effect of replacing $f$ with $f-\id_\D \alpha$ and $g-\beta \,\id_\D$ where the scalars $\alpha$ and $\beta$ can be chosen at will in $\D$
(of course $\lambda$ and $\mu$ are altered in this process, but this will not be an issue).
Hence, we lose no generality in assuming that $f(1)=0=g(1)$, and we will only change this assumption at a very specific point of the study.

Next, we have to exploit the fact that $\Phi$ maps every rank $1$ matrix to a vector of its range.
Take $a \in V \setminus \{0\}$, and let $x \in \D$.
Then $M=\begin{bmatrix}
a^{-1} & \varepsilon x^\star \\
x & \varepsilon x a x^\star
\end{bmatrix}$ has its range spanned by the vector
$\begin{bmatrix}
1 \\
xa
\end{bmatrix}$, so $\Phi(M)=\begin{bmatrix}
\lambda(a^{-1})+\varepsilon g(x)^\star \\
f(x)+\mu(\varepsilon x a x^\star)
\end{bmatrix}$ is a right multiple of this vector. Hence 
\begin{equation}\label{eq:basicid}
\forall x \in \D, \; \forall a \in V \setminus \{0\}, \quad f(x)+\mu(\varepsilon x a x^\star)=xa(\lambda(a^{-1})+\varepsilon g(x)^\star).
\end{equation}
We shall now take advantage of this identity to decipher the various mappings under consideration.
We shall split the discussion into several cases, by increasing order of difficulty.

\vskip 3mm
\noindent \textbf{Case 1:} $\car(\D) \neq 2$. \\
Denote by $\F_0$ the prime subfield of $\D$, so that $|\F_0|>2$. We view $V$ and $\D$ as $\F_0$-vector spaces, and we see 
that $f,g,\lambda,\mu$ are all $\F_0$-linear. Let $a \in V \setminus \{0\}$.
Fix $x \in \D$. Applying identity \eqref{eq:basicid} to $t x$ for an arbitrary $t \in \F_0$, 
we find $t f(x)+t^2 \varepsilon \mu(x a x^\star)=txa \lambda(a^{-1})+t^2\varepsilon x a g(x)^\star$.
This is a polynomial identity in the variable $t$ in the $\F_0$-vector space $\D$.
Since $|\F_0|>2$ we can extract the identity $f(x)=xa \lambda(a^{-1})$. 
Applying this to $x=1$, we obtain $\lambda(a^{-1})=0$ for all $a \in V\setminus \{0\}$, and hence $\lambda=0$.
And then $f=0$ by picking a nonzero $a$ once more, and an arbitrary $x$.
Symmetrically, we obtain $\mu=0$ and $g=0$. Then $\Phi$ is the zero mapping, and we conclude that it is local
(it is the right multiplication by the zero vector).

\vskip 3mm
\noindent \textbf{Case 2:} $\car(\D) = 2$. \\
We come back to identity \eqref{eq:basicid} to obtain, for all $a \in V \setminus \{0\}$,
$$\forall x \in \D, \; f(x)+xa\lambda(a^{-1})=\mu(x a x^\star)-xa g(x)^\star.$$
By polarizing, we deduce that 
$$\forall a \in V \setminus \{0\}, \; \forall (x,y)\in \D^2, \; \mu(x a y^\star+y a x^\star)=x a g(y)^\star+y a g(x)^\star,$$
which obviously holds also for $a=0$.
Now, fix $z \in \D^\times$.
Let $a \in V$. Then for all $(x,y)\in \D^2$ we find
\begin{align*}
(xz) a g(yz)^\star+(yz) a g(xz)^\star
& =\mu((xz) a (yz)^\star+(yz) a (xz)^\star) \\
& =\mu(x(zaz^\star)y^\star+y (zaz^\star) x^\star) \\
& =x (z a z^\star) g(y)^\star+y (z a z^\star) g(x)^\star.
\end{align*}
Applying this to $a':=z^{-1} a (z^\star)^{-1}$ leads to 
$$x a (g(yz) z^{-1})^\star+y a (g(xz) z^{-1})^\star=x a g(y)^\star+y a g(x)^\star.$$
Hence, for $\varphi_z : x \mapsto g(xz) z^{-1}-g(x)$, we have proved 
that $x a \varphi_z(y)^\star=y a \varphi_z(x)^\star$ for all $x,y$ in $\D$ and all $a \in V$,
and this is of course related to Lemma \ref{lemma:basic}.

We shall now split the discussion into two subscases, whether $\D$ is commutative or not.

\vskip 3mm
\noindent \textbf{Subcase 2.1:} $\D$ is noncommutative. \\
By applying Lemma \ref{lemma:basic} to $\varphi_z$, we deduce that $g(xz) =g(x)z$ for all $x \in \D$ and all $z \in \D^\times$ (and of course also for $z=0$). Applying this to $x=1$ and varying $z$ leads to $g=0$. Symmetrically, we obtain $f=0$.
At this point, identity \eqref{eq:basicid} can be rewritten
\begin{equation}\label{eq:basicidsubcase2.1}
\forall a \in V \setminus \{0\}, \; \forall x \in \D, \quad 
x a \lambda(a^{-1})=\mu(xax^\star).
\end{equation}
Taking $x=a^{-1}$, this yields $\forall a \in V \setminus \{0\}, \; \lambda(a^{-1})=\mu(a^{-1})$, and hence $\lambda=\mu$.
Next, we obtain $\forall (x,a) \in \D \times (V \setminus \{0\}), \; \lambda (xax^\star)=x a \lambda(a^{-1})$, the special case $x=1$ yields
$\forall a \in V \setminus \{0\}, \; \lambda(a)=a \lambda(a^{-1})$ and we conclude from \eqref{eq:basicidsubcase2.1} that $\lambda$ is root-linear.
If in addition $V=A$ then $\lambda=0$ by Lemma \ref{lemma:rootlinear}, to the effect that $\Phi=0$.
Otherwise we can still write $\Phi : M \mapsto \Delta(M)^\lambda$.

\vskip 3mm
\noindent \textbf{Subcase 2.2:} $\D$ is commutative. \\
We further split the discussion into two subcases, whether $(-)^\star$ is the identity or not.

\vskip 3mm
\noindent \textbf{Subsubcase 2.2.1:} $(-)^\star$ is the identity. \\
This case has already been dealt with in \cite{dSPRC1}, but since its treatment is very short we will repeat the argument.
We go right back to identity \eqref{eq:basicid}, apply it to $a=1$ and rewrite the result as
$$\forall x \in \D, \; f(x)+\mu(x^2)+x\lambda(1)=xg(x).$$
The left-hand side is an additive function of $x$.
By polarizing, we obtain $xg(y)+y g(x)=0$ for all $x,y$ in $\D$. Taking $x=1$, we deduce that $g=0$.
Symmetrically, we find $f=0$. The conclusion is then derived as in Subcase 2.1.

\vskip 3mm
\noindent \textbf{Subsubcase 2.2.2:} $(-)^\star$ is not the identity. \\
Here $V=H=A$.
Let $z \in \D^\times$. Again, we apply Lemma \ref{lemma:basic} to $x \mapsto z^{-1} g(xz)-g(x)$, and we recover an element
$\alpha(z) \in \D$ such that $\forall y \in \D, \; g(yz)=z g(y)+\alpha (z) y^\star$.
This of course holds also for $z=0$ by taking $\alpha(0):=0$.
With $y=1$ and varying $z$, we find $\alpha=g$. Hence $\forall (y,z) \in \D^2, \; g(yz)=z g(y)+g(z) y^\star$.
Since $\D$ is commutative, for all $(y,z) \in \D^2$ we obtain
$zg(y)+g(z) y^\star=y g(z)+g(y)z^\star$ and hence $g(y)(z-z^\star)=g(z) (y-y^\star)$.
By choosing $z$ such that $z-z^\star \neq 0$ (this is possible since $(-)^\star$ is not the identity),
we deduce that $g : x \mapsto \beta (x^\star-x)$ for some $\beta \in \D$.
Symmetrically, $f : x \mapsto \alpha (x^\star-x)$ for some $\alpha \in \D$.
At this point, we will change the fundamental assumption we started from: we add
$x \mapsto \alpha x$ to $f$ and $x \mapsto \beta x$ to $g$. Beware then that we no longer have $f(1)=0=g(1)$, 
but we have the simplified expressions $f : x \mapsto \alpha x^\star$ and $g : x \mapsto \beta x^\star$.

Now, we shall prove that $\lambda=\mu=0$ in this situation. We come right back to identity \eqref{eq:basicid} and obtain
$$\forall x \in \D,\; \forall a \in H \setminus \{0\}, \; 
\mu(xx^\star a)=xa\lambda(a^{-1})+\beta^\star a x^2+\alpha x^\star.$$
Let us fix $a \in  H \setminus \{0\}$. Here the right-hand side is an additive function of $x$,
so by polarizing we find $\mu((xy^\star+y x^\star)a)=0$ for all $x,y$ in $H$.
The special case $y=1$ and $a=1$ yields $\mu(a')=0$ for all $a' \in A=H=V$. Hence $\mu=0$, and
symmetrically we obtain $\lambda=0$.
We are almost ready to conclude. Now, with $a=1$ identity \eqref{eq:basicid} is simplified as
$$\forall x \in \D, \; \beta^\star x^2=\alpha x^\star.$$
With $x=1$ we deduce that $\alpha=\beta^\star$.

If $\alpha=0$ we deduce from having $\beta=\alpha^\star=0$ that $\Phi=0$. Assume finally that $\alpha \neq 0$.
Hence the previous identity yields $\forall x \in \D, \; x^2=x^\star$, and then $\forall x \in \D, \; x^4=(x^2)^2=(x^\star)^\star =x$.
Since $\D$ is a field, this yields $|\D| \leq 4$. Of course we must then have $|\D|=4$ because $(-)^\star$ is not the identity.
Finally, we have $f : x \mapsto \alpha x^\star$, $g : x \mapsto \alpha^\star x^\star$, and $\lambda=\mu=0$, 
and we conclude that $\Phi=\Phi_0\, \alpha$. 

This completes the proof of all Theorems \ref{theo:mainHermitian}, \ref{theo:mainaltHermitian}, and \ref{theo:specialHermitian} 
in the special case $n=2$, as well as the one of Theorem \ref{theo:specialHermitianF4}.

\section{Matrices with more than two rows}

We will now finish the proof of the main theorems by using the case $n=2$ to derive the one of larger values of $n$.

We have to consider three distinct situations. We will start with the easier one, in which 
the characteristic of $\D$ is not $2$. Then, we will consider the characteristic $2$ case, excluding the special 
case where $|\D|=4$ and $(-)^\star$ is not the identity. We will finish with this special case, which is slightly more difficult.

In any case, we take an integer $n \geq 3$, we set
$\calM=\MatH_n(\D)$ and $\calN=\MatH_2(\D)$ (respectively $\calM=\Matah_n(\D)$ and $\calN=\Matah_2(\D)$ if $(-)^\star$ is not the identity).
We set $\varepsilon:=1$ (respectively $\varepsilon:=-1$) and $V:=H$ (respectively $V:=A$),
and we let $\Phi$ be a range-compatible homomorphism on $\calM$.

For $i \in\lcro 1,n\rcro$ and $M \in \calM$, we denote by $\Phi(M)_i$ the $i$-th entry of the column vector $\Phi(M)$, and we recall that $(E_1,\dots,E_n)$
denotes the standard basis of the right $\D$-vector space $\D^n$.

Let $i$ and $j$ be distinct elements of $\lcro 1,n\rcro$.
For each $(a,b,x)\in V^2 \times \D$, the vector
$\Phi(E_{i,i}\,a+E_{j,j}\,b+ E_{j,i}\,x+E_{i,j}\,\varepsilon x^\star)$ must belong to the range of 
$M:=E_{i,i}\,a+E_{j,j}\,b+ E_{j,i}\,x+E_{i,j}\,\varepsilon x^\star$, and hence it equals
$E_i\,\Phi(M)_i+E_j\,\Phi(M)_j $.
Better still, the mapping
$$\Phi_{i,j} : \begin{bmatrix}
a & \varepsilon x^\star \\
x & b
\end{bmatrix} \in \calN \mapsto \begin{bmatrix}
\Phi(E_{i,i}\,a+E_{j,j}\,b+ E_{j,i}\,x+E_{i,j}\,\varepsilon x^\star)_i \\
\Phi(E_{i,i}\,a+E_{j,j}\,b+ E_{j,i}\,x+E_{i,j}\,\varepsilon x^\star)_j
\end{bmatrix}$$
is a range-compatible homomorphism, which will allow us to use the results of the $2$-by-$2$ case.

\subsection{Easy cases}\label{section:easycases}

Here, we assume that every range-compatible homomorphism on $\MatH_2(\D)$ (respectively, on $\Matah_2(\D)$) is local, and 
we shall prove that this extends to $\Mat_n(\D)$. This will complete the proofs of Theorems \ref{theo:mainaltHermitian} and \ref{theo:mainHermitian}.

We choose $a_0 \in V \setminus \{0\}$, which is possible in any case.
Then for all $i \in \lcro 1,n\rcro$ we have a scalar $x_i \in \D$ such that $\Phi(E_{i,i}\, a_0)=E_i\,a_0 x_i$.
We can then set $X:=\begin{bmatrix}
x_1 & \cdots & x_n
\end{bmatrix}^T$ and subtract the local mapping $M \mapsto MX$ from $\Phi$ to reduce the situation to the one where
$\Phi$ vanishes at $E_{i,i}\, a_0$ for all $i \in \lcro 1,n\rcro$. In that reduced situation we shall prove that $\Phi$ is zero, and in particular
it will be a local mapping.

Let $i,j$ be distinct elements of $\lcro 1,n\rcro$. We apply the $2$-by-$2$ case to 
$\Phi_{i,j}$: It follows that $\Phi_{i,j}$ is local, which yields a pair $(\alpha,\beta)\in \D^2$ such that 
$\Phi_{i,j}(N)=N \begin{bmatrix}
\alpha \\
\beta
\end{bmatrix}$ for all $N \in \calN$. In particular $\Phi(E_{i,i}\, a_0)=a_0 \alpha$ and $\Phi(E_{j,j}\, a_0)=a_0 \beta$,
and we deduce that $\alpha=0=\beta$. In turn, this yields $\Phi(E_{j,i} x+E_{i,j}\,\varepsilon x^\star)=0$ for all $x \in \D$,
and $\Phi(E_{i,i}\, a)=0$ for all $a \in V$.
Varying $i$ and $j$ and using the fact that $\Phi$ is additive, we conclude that $\Phi=0$.

\subsection{The special case of fields with characteristic $2$, for Hermitian matrices}

Now, in order to prove Theorem \ref{theo:specialHermitian} for the integer $n$,  we consider the case where $\car(\D)=2$, and we take
$\calM=\MatH_n(\D)$ and $\calN=\MatH_2(\D)$. We rule out the case where $|\D|=4$ and $(-)^\star$ is not the identity, 
which allows us to apply the case $n=2$ in Theorem \ref{theo:specialHermitian}.

Let $i$ and $j$ be distinct elements of $\lcro 1,n\rcro$. Again, we apply the $2$-by-$2$ case to 
$\Phi_{i,j}$. This yields a uniquely-defined vector $X=\begin{bmatrix}
a_{i,j} \\
b_{i,j}
\end{bmatrix} \in \D^2$ and a uniquely-defined root-linear mapping $\alpha_{i,j} : H \rightarrow \D$ such that 
$$\forall N \in \MatH_2(\D), \; \Phi_{i,j}(N)=NX+\Delta(N)^{\alpha_{i,j}}.$$
This yields $\forall x\in \D, \; \Phi(E_{j,i}\, x+ E_{i,j}\, x^\star)=E_j\, x a_{i,j} +E_i\, x^\star b_{i,j}$.
We shall prove that $a_{i,j}$ depends only on $i$.
Let indeed $i,j,j'$ be distinct elements of $\lcro 1,n\rcro$.
We have $\Phi(E_{j,i}+E_{i,j})=E_j\, a_{i,j}+  E_i \, b_{i,j}$ 
and $\Phi(E_{j',i}+E_{i,j'})= E_{j'}\, a_{i,j'} + E_i\, b_{i,j'}$, 
and hence 
$$\Phi(E_{j,i}+E_{j',i}+E_{i,j}+E_{i,j'})= E_j\, a_{i,j}+ E_{j'}\, a_{i,j'}+E_i\, (b_{i,j}+b_{i,j'}).$$ 
Noting that the range of 
$E_{j,i}+E_{j',i}+E_{i,j}+E_{i,j'}$ is the linear span of
$E_i$ and $E_j+E_{j'}$, we deduce from the range-compatibility assumption that $a_{i,j}=a_{i,j'}$, as claimed.
Finally, it is clear that $b_{i,j}=a_{j,i}$ for all distinct $i,j$ in $\lcro 1,n\rcro$.
Hence, we recover a list 
$X=\begin{bmatrix}
a_1 & \cdots & a_n
\end{bmatrix}^T$ of elements of $\D$ such that 
$\Phi(E_{j,i}\,x+ E_{i,j}\, x^\star )=E_i\, x a_i+E_j\, x^\star a_j$ for all distinct $i,j$ in $\lcro 1,n\rcro$ and all $x \in \D$.

Hence, by subtracting from $\Phi$ the local mapping $M \mapsto MX$, we reduce the situation to the one where 
$\Phi(E_{j,i}\, x+ E_{i,j}\, x^\star)=0$ for all distinct $i,j$ in $\lcro 1,n\rcro$.

Finally, we come back to the analysis of the $\Phi_{i,j}$ mappings. Let $i,j$ be distinct elements of $\lcro 1,n\rcro$.
Now we have $a_{i,j}=b_{i,j}=0$, because $\Phi(E_{j,i}+E_{i,j})=0$.
It follows that we have a root-linear mapping $\alpha_{i,j} : H \rightarrow \D$ such that 
$\Phi_{i,j}(N)=\Delta(N)^{\alpha_{i,j}}$ for all $N \in \MatH_2(\D)$. In particular 
$\Phi(E_{i,i}\, a)=E_i \, \alpha_{i,j}(a)$ and $\Phi(E_{j,j}\,a)=E_j\, \alpha_{i,j}(a)$ for all $a \in H$.
In particular we have root-linear mappings $\alpha_1,\dots,\alpha_n$ on $H$ such that 
$\Phi(E_{i,i}\, a)=E_i\, \alpha_i(a) $ for all $i \in \lcro 1,n\rcro$ and all $a \in H$, and clearly for all distinct $i,j$ in $\lcro 1,n\rcro$
we have $\alpha_i=\alpha_{i,j}=\alpha_j$. Hence for $\alpha:=\alpha_1$ we have $\alpha_i=\alpha$ for all $i \in \lcro 1,n\rcro$.
We conclude, since $\Phi$ is additive, that $\Phi : M \mapsto \Delta(M)^\alpha$, which completes the proof.

\subsection{The special case of $\F_4$}

Here we assume that $|\D|=4$ and $(-)^\star$ is not the identity of $\D$.
Hence $\D$ is a finite field with cardinality $4$, which we write $\F_4$,
and $H$ is the prime subfield of $D$, which we denote by $\F_2$.
Note that $H$ has only one nonzero element.

We must prove that every range-compatible homomorphism on $\MatH_n(\D)$
is local. Since this fails for $n=2$ we will have to settle the case $n=3$ first, and then the case of larger integers will be deduced from the 
case $n=3$ by using a localization technique that is similar to what was used earlier.

So, we consider first the case $n=3$. We wish to prove that $\Phi$ is local.
We have scalars $a_1,a_2,a_3$ in $\F_4$ such that $\Phi(E_{i,i})=E_i\, a_i$ for all $i \in \lcro 1,3\rcro$.
Subtracting from $\Phi$ the local mapping $M \mapsto MX$ for $X:=\begin{bmatrix}
a_1 & a_2 & a_3
\end{bmatrix}^T$, we lose no generality in assuming that $\Phi$ vanishes at $E_{1,1},E_{2,2}$ and $E_{3,3}$.
Then, let $i,j$ be distinct elements of $\lcro 1,3\rcro$, and let us consider the localized mapping
$\Phi_{i,j} : \MatH_2(\F_4) \rightarrow (\F_4)^2$. By Theorem \ref{theo:specialHermitianF4}, there is a scalar $\alpha \in \F_4$ and a vector 
$Y \in (\F_4)^2$ such that $\Phi_{i,j} : N \mapsto NY+\Phi_0(N) \alpha$ (recall the notation $\Phi_0$ from Section \ref{section:mainresults}).
Since $\Phi_0$ vanishes at every diagonal matrix, and since $\Phi_{i,j}$ also does, we obtain $Y=0$.
Hence $\Phi(E_{j,i}\,x+E_{i,j}\, x^\star)=E_i\, x \alpha +E_j\, x^\star \alpha$ for all $x \in \F_4$.
Varying $\{i,j\}$, we obtain three scalars $\alpha,\beta,\gamma$ such that 
$$\Phi : \begin{bmatrix}
? & x^\star & y^\star \\
x & ? & z^\star \\
y & z & ?
\end{bmatrix} \longmapsto \begin{bmatrix}
\alpha x+\beta y \\
\alpha x^\star +\gamma z \\
\beta y^\star+\gamma z^\star
\end{bmatrix}.$$
Taking the special matrix with all entries equal to $1$, which has rank $1$, we deduce that 
$\alpha+\beta=\alpha+\gamma=\beta+\gamma$, which yields $\alpha=\beta=\gamma$.
Finally, we take $x \in \F_4 \setminus \F_2$ and consider the special matrix
$M=X^\star X=\begin{bmatrix}
1 & x & x \\
x^\star & 1 & 1 \\
x^\star & 1 & 1
\end{bmatrix}$, for $X:=\begin{bmatrix}
1 & x & x
\end{bmatrix}$, whose column space is spanned by $X^\star=\begin{bmatrix}
1 \\
? \\
?
\end{bmatrix}$. Yet $\Phi(M)=\alpha \begin{bmatrix}
0 \\
x+1 \\
x+1
\end{bmatrix}$
with $x+1 \neq 0$.
It follows that $\alpha=0$, and we conclude that $\Phi=0$.

Let us complete the proof by returning to the more general case where $n \geq 3$.
Again, we want to prove that $\Phi$ is local.
As in Section \ref{section:easycases} we subtract a well-chosen local mapping to reduce the situation to the one where $\Phi(E_{i,i})=0$
for all $i \in \lcro 1,n\rcro$. Let then 
$i_1,i_2,i_3$ be distinct integers in $\lcro 1,n\rcro$. For all $N=(n_{k,l}) \in \MatH_3(\F_4)$, consider the Hermitian matrix
$\widetilde{N}^{(i_1,i_2,i_3)}:=\sum_{1\leq k,l \leq 3} E_{i_k,i_l}\,n_{k,l} \in \MatH_n(\F_4)$,
and the mapping 
$$\Phi_{i_1,i_2,i_3} : N \in \MatH_3(\F_4) \longmapsto \begin{bmatrix}
\Phi(\widetilde{N}^{(i_1,i_2,i_3)})_{i_1} \\
\Phi(\widetilde{N}^{(i_1,i_2,i_3)})_{i_2} \\
\Phi(\widetilde{N}^{(i_1,i_2,i_3)})_{i_3}
\end{bmatrix} \in (\F_4)^3,$$ 
which is clearly a range-compatible homomorphism.
By the previous part of the proof, we find that $\Phi_{i_1,i_2,i_3}$ is local.

Like in Section \ref{section:easycases}, we combine this with the observation that 
$\Phi(E_{i_k,i_k})=0$ for all $k \in \{1,2,3\}$ to deduce that $\Phi_{i_1,i_2,i_3}=0$.
In particular $\Phi(E_{i_1,i_2}\, x+ E_{i_2,i_1}\, x^\star)=0$ for all $x \in \F_4$.
Varying $(i_1,i_2,i_3)$ yields, since $\Phi$ is additive, that $\Phi=0$, and the proof is complete.

\end{document}